\journal{Computers and Mathematics with Applications}
\newtheorem{theorem}{Theorem}[section]
\newtheorem{lemma}{Lemma}[section]
\newtheorem{definition}{Definition}[section]
\numberwithin{equation}{section}
\numberwithin{figure}{section}
\numberwithin{table}{section}
\def\XXint#1#2#3{{\setbox0=\hbox{$#1{#2#3}{\int}$}
\vcenter{\hbox{$#2#3$}}\kern-.51\wd0}}
\begin{document}

\setlength{\pdfpageheight}{\paperheight}
\setlength{\pdfpagewidth}{\paperwidth}
\title{Double Fourier Sphere Methods with Low Rank Approximation for Block Copolymer Systems on Sphere}
\author{Wangbo Luo}
\address{Department of Applied Mathematics, The Hong Kong Polytechnic University, Hung Hom, Kowloon,
Hong Kong}
\author{Yanxiang Zhao}
\address{Department of Mathematics, George Washington University, Washington D.C., 20052}
\fntext[myfootnote]{Corresponding author: yxzhao@email.gwu.edu}

\begin{abstract}

We introduce Double Fourier Sphere (DFS) methods for the Ohta-Kawasaki (OK) and Nakazawa-Ohta (NO) models on a spherical domain, examining their coarsening dynamics and equilibrium pattern formations. We employed DFS for spatial discretization and the second-order Backward Differentiation Formula (BDF2) scheme for time evolution, resulting in an efficient energy-stable scheme to simulate the OK and NO models on the unit sphere. Our numerical experiments reveal various self-assembled patterns, such as single-bubble assemblies in binary systems and double-bubble and mixed-bubble assemblies in ternary systems. These patterns closely resemble experimental biomembrane patterns, demonstrating the effectiveness of the OK model in real-world applications. Additionally, our study explores the relationship between repulsive strength and the number of bubbles in assemblies, confirming the two-thirds law in the OK model. This provides quantitative evidence of how self-assembled patterns depend on system parameters in copolymer systems.
\end{abstract}

\begin{keyword}
Block copolymers, Ohta-Kawasaki model, Nakazawa-Ohta model, Spherical domain, Energy stability, Bubble assemblies, Hexagonal lattice.
\end{keyword}

\date{\today}
\maketitle

\section{Introduction}\label{Intro}

Block copolymers are macromolecules composed of covalently bonded polymer chains, each containing monomers with different repeating units. These materials show a remarkable ability for self-assembly, forming various nanoscale-ordered structures at thermodynamic equilibrium \cite{Bats_Fredrickson1990,Hamley2004,Botiz_Darling2010}. Such behaviors have demonstrated substantial utility for understanding and predicting pattern morphologies \cite{Bats_Fredrickson1999,Choksi2011}. Over the past few decades, the periodic structures of block copolymer systems have been extensively studied through both experimental and theoretical investigations \cite{Helfand_Wasserman1976,Leibler1980,Meier1969,Ohta_Kawasaki1986}.

More recently, two well-known phase-field models, the Ohta-Kawasaki (OK) model and the Nakazawa-Ohta (NO) model, have attracted attention for their application in studying the dynamics of complex systems, particularly phase separation and pattern formation in block copolymers \cite{LuoZhao_PhysicaD2024,LuoZhao_NumPDE2024,LuoZhao_AAMM2024,Ren_Shoup2020,Wang_Ren_Zhao2019,Wang_Ren2017}. The OK model, introduced in \cite{Ohta_Kawasaki1986}, describes the phase separation in diblock copolymers, also referred to as binary systems. These systems are composed of two distinct subchains, one consisting of species $A$ and the other of species $B$, respectively, with long-range interactions. Building on this, the NO model, proposed in \cite{Nakazawa_Ohta1993}, generalizes the OK framework to describe triblock copolymer systems, or ternary systems, which consist of three subchains, each containing species $A$, $B$, and $C$, respectively. This extension leads to more complex phase separation behaviors.

As a continuation of our study on the numerical methods for the OK and NO models on the unit disk \cite{LuoZhao_AAMM2024}, this work aims to develop numerical methods to explore the coarsening dynamics and pattern formations at the equilibrium state of the OK and NO models on a spherical domain.

{The OK model \cite{Ohta_Kawasaki1986} for the diblock copolymers on the spherical domain (for brevity, hereafter we refer to it as {\it spherical OK model} (SOK)),  is characterized by the following energy functional, which includes a long-range $(-\Delta)^{-1}$-type interaction \cite{Xu_Zhao2019,Xu_Zhao2020}:}
\begin{align}\label{eqn:OK}
        E^{\mathrm{{S}OK}}[u] = \int_{\Omega}\left[\frac{\epsilon}{2}|\nabla_{\mathbb{S}^2} u|^2+\frac{1}{\epsilon}W(u)\right] \text{d}x + \frac{\gamma}{2}\int_{\Omega}|(-\Delta_{\mathbb{S}^2})^{-\frac{1}{2}}(u-\omega)|^2 \ \text{d}x,
\end{align}
with the volume constraint
\begin{align*}
    \int_{\Omega}u\ \text{d}x = \omega|\Omega|,
\end{align*}
where $\Omega = \mathbb{S}^2 \subset \mathbb{R}^3$ is the unit sphere, and $0<\epsilon\ll 1$ is an interface parameter that indicates the system is in deep segregation regime. Here, $u = u(x)$ is a phase field labeling function which represents the density of the species $A$, and the concentration of the species $B$ is implicitly represented by $1-u(x)$ with the assumption of incompressibility for the binary system. The function $W(u) = 18(u^2-u)^2$ is a double well potential which enforces the phase field function $u$ to be equal to 1 inside the interface and 0 outside the interface. In the interfacial region, the function $u$ transitions rapidly but smoothly from 0 to 1. { Operators $\nabla_{\mathbb{S}^2}$ and $\Delta_{\mathbb{S}^2}$  represent the spherical gradient and spherical Laplacian,
\begin{align}
    \nabla_{\mathbb{S}^{2}} f(\phi,\theta)  &= \ \nabla f(\phi,\theta) - \hat{\mathbf{n}} (\hat{\mathbf{n}}\cdot\nabla f(\phi,\theta)), \\
    \Delta_{\mathbb{S}^{2}}f(\phi,\theta) &= (\sin{(\theta)})^{-1} \frac{\partial}{\partial\theta}\left(\sin{(\theta)}\frac{\partial f}{\partial\theta}\right)+(\sin{(\theta)})^{-2} \frac{\partial^2 f}{\partial\phi^2},
\end{align}
with $\theta$ and $\phi$ being the polar and azimuthal angles.} The first integral in (\ref{eqn:OK}) is a local surface energy which represents the short-range interaction between the chain molecules and favors the large domain. In contrast, the second integral in (\ref{eqn:OK}) models the long-range repulsive interaction with $\gamma >0$ being the strength of the repulsive force that favors the smaller size of the species and leads to the microphase separation in the {S}OK model. Finally, $\omega\in(0,1)$ is the relative volume of the species $A$ among the whole domain. Since species $A$ and $B$ are assumed to be incompressible, it suffices to consider $\omega \in (0,1/2)$, as otherwise, one can simply reverse the roles of species $A$ and $B$.

{ Similarly, for the NO model \cite{Nakazawa_Ohta1993} on spherical domain, which we call {\it spherical NO model} (SNO) hereafter, the associated free energy functional with long-range interaction reads:}
\begin{align}\label{eqn:NO}
    E^{\mathrm{{S}NO}}[u_1,u_2] = & \int_{\Omega}\frac{\epsilon}{2}\left(|\nabla_{\mathbb{S}^2} u_1|^2+|\nabla_{\mathbb{S}^2} u_2|^2+\nabla_{\mathbb{S}^2} u_1 \cdot \nabla_{\mathbb{S}^2} u_2\right) \text{d}x + \int_{\Omega} \frac{1}{\epsilon}W_2(u_1,u_2)\ \text{d}x \nonumber\\
    & + \sum_{i,j=1}^{2}\frac{\gamma_{ij}}{2}\int_{\Omega}\left[(-\Delta_{\mathbb{S}^2})^{-\frac{1}{2}}(u_i-\omega_i)\times (-\Delta_{\mathbb{S}^2})^{-\frac{1}{2}}(u_j-\omega_j)\right] \text{d}x,
\end{align}
where $u_i = u_i(x)$, $i=1,2$ are phase field labeling functions which represent the density of species $A$ and $B$, respectively, and the concentration of species $C$ is implicitly described by $1-u_1(x)-u_2(x)$. The potential $W_2(u_1,u_2)$ is defined as 
\begin{align*}
W_2(u_1,u_2): = \frac{1}{2}\Big[W(u_1)+W(u_2)+W(1-u_1-u_2)\Big].
\end{align*}
The parameter $\gamma_{ij} > 0$, for $i,j=1,2$, represents the strength of the long-range repulsive interaction between $i$-th and $j$-th species (i.e., species $A$ and $B$), with the assumption of symmetry $\gamma_{12} = \gamma_{21}$. The volume constraints for $u_1$ and $u_2$ are given by
\begin{align*}
        \int_{\Omega}u_i\ \text{d}x = \omega_i|\Omega|, \ \ i = 1,2.
\end{align*}
All other parameters are similar to those in the {S}OK model (\ref{eqn:OK}). 

To study the coarsening dynamics and pattern formation at the equilibrium state of the SOK and SNO models, we focus on minimizing the energy functionals (\ref{eqn:OK}, \ref{eqn:NO}) with the volume constraints. To implement these constraints to the minimization problems, we introduce penalty terms to the energy functionals, leading to the unconstrained free energy functionals for the penalized SOK and SNO models \cite{Xu_Zhao2019,Choi_Zhao2021},
\begin{align}
    E^{\mathrm{pSOK}}[u]  = & \int_{\Omega}\left[\frac{\epsilon}{2}|\nabla_{\mathbb{S}^2} u|^2+\frac{1}{\epsilon}W(u)\right] \text{d}x + \frac{\gamma}{2}\int_{\Omega}|(-\Delta_{\mathbb{S}^2})^{-\frac{1}{2}}(u-\omega)|^2 \ \text{d}x \nonumber\\
    & + \frac{M}{2}\left(\int_{\Omega}u\ \text{d}x - \omega|\Omega|\right)^2, \label{eqn:pOK} \\
    E^{\mathrm{pSNO}}[u_1,u_2] = & \int_{\Omega}\frac{\epsilon}{2}\left(|\nabla_{\mathbb{S}^2} u_1|^2+|\nabla_{\mathbb{S}^2} u_2|^2+\nabla_{\mathbb{S}^2} u_1 \cdot \nabla_{\mathbb{S}^2} u_2\right) \text{d}x + \int_{\Omega} \frac{1}{\epsilon}W_2(u_1,u_2)\ \text{d}x \nonumber\\
    & + \sum_{i,j=1}^{2}\frac{\gamma_{ij}}{2}\int_{\Omega}\left[(-\Delta_{\mathbb{S}^2})^{-\frac{1}{2}}(u_i-\omega_i)\times (-\Delta_{\mathbb{S}^2})^{-\frac{1}{2}}(u_j-\omega_j)\right] \text{d}x \nonumber\\
    & + \sum_{i=1}^{2} \frac{M_i}{2}\left(\int_{\Omega}u_i\ \text{d}x - \omega_i|\Omega|\right)^2. \label{eqn:pNO}
\end{align}
In this work, by considering the $L^2$ gradient flow for the penalized SOK (\ref{eqn:pOK}) and penalized SNO (\ref{eqn:pNO}), we develop the penalized Allen-Cahn dynamics of the SOK model (pACSOK) for the time evolution $u(x,t)$ with given initial data $u(x,t=0) = u_0(x)$, 
\begin{align}\label{eqn:pACOK}
    \frac{\partial u}{\partial t} =  \epsilon\Delta_{\mathbb{S}^2} u - \frac{1}{\epsilon}W'(u) - \gamma(-\Delta_{\mathbb{S}^2})^{-1}(u - \omega) - M \left(\int_{\Omega}u\ \text{d}x - \omega|\Omega|\right), 
\end{align}
and the penalized Allen-Cahn dynamics of SNO model (pACSNO) for the time evolution $u_i(x,t)$, $i=1,2$ with given initial conditions $u_i(x,t=0) = u_{i,0}(x)$,
\begin{align}\label{eqn:pACNO}
        \frac{\partial u_i}{\partial t}  =  &\  \epsilon\Delta_{\mathbb{S}^2} u_i + \frac{\epsilon}{2}\Delta_{\mathbb{S}^2} u_j- \frac{1}{\epsilon}\frac{\partial W_2}{\partial u_i} - \gamma_{ii}(-\Delta_{\mathbb{S}^2})^{-1}(u_i- \omega_i)\nonumber \\
        & - \gamma_{ij}(-\Delta_{\mathbb{S}^2})^{-1}(u_j - \omega_j)
         - M_i \left(\int_{\Omega}u_i\ \text{d}x - \omega_i|\Omega|\right), \ \ i,j = 1,2. 
\end{align}
Both the pACSOK (\ref{eqn:pACOK}) and pACSNO (\ref{eqn:pACNO}) dynamics satisfy the energy dissipative law
\begin{align}
& \frac{d}{dt}E^{\mathrm{pSOK}}[u] = -\left\| \frac{\partial u}{\partial t}\right\|_{L^2}^{2} \leq 0, \nonumber \\
& \frac{d}{dt}E^{\mathrm{pSNO}}[u_1,u_2]  = - \left\| \frac{\partial u_{1}}{\partial t}\right\|_{L^2}^{2} - \left\| \frac{\partial u_{2}}{\partial t}\right\|_{L^2}^{2}\leq 0, \nonumber
\end{align}
that is, 
\begin{align}
& E^{\mathrm{pSOK}}[u(t_2)] \leq E^{\mathrm{pSOK}}[u(t_1)], \quad \forall t_2\geq t_1\geq0, \nonumber \\
& E^{\mathrm{pSNO}}[u_1(t_2),u_2(t_2)] \leq E^{\mathrm{pSNO}}[u_1(t_1),u_2(t_1)], \quad \forall t_2\geq t_1\geq0. \nonumber
\end{align}

\subsection{Notations}\label{subsec:notation}
In this subsection, we summarize some conventional notations used throughout the paper. For the discussion of energy stability in the rest of the paper, we adopt the quadratic extension $\Tilde{W}(u)$ of $W(u)$, as proposed in \cite{Shen_Yang2010}. This extension ensures that $W''$ possesses a finite upper bound, a critical condition for energy stable schemes in $L^2$ gradient flow dynamics \cite{Shen_Yang2010}. For simplicity in our discussion, we continue to use $W(u)$ to represent $\Tilde{W}(u)$, and define $ L_{W''}:= \|W''\|_{L^{\infty}}$, namely, $L_{W''}$ denoting the upper bound of $|W''|$.

For the spatial domain, we apply the similar notations as in \cite{Xu_Zhao2019}. Let the spherical domain $\Omega = [-\pi,\pi]\times[0,\pi]\subset\mathbb{R}^2$, and the space $\mathring{H}^{s}(\Omega)$ is given by
\begin{align*}
   & \mathring{H}^{s}_{p}(\Omega) = \left\{u(\phi,\theta)\in H^s(\Omega): \ u \text{ is periodic in }\phi, \theta, \text{ and } \int_{\Omega}u\ \text{d}x =0 \right\}, 
\end{align*}
consisting of all functions $u\in H^s(\Omega)$ which satisfy the periodic boundary condition in both $\phi$ and $\theta$ - directions with zero mean. We use $\|\cdot\|_{H^s}$ to represent the standard Sobolev norm and $L^2(\Omega) = H^0(\Omega)$.

The inverse of the spherical Laplacian $(-\Delta_{\mathbb{S}^2})^{-1}: \ \mathring{L}^{2}_{p}(\Omega) \to \mathring{H}^{1}_{p}(\Omega)$ is presented as follows:
\begin{align}\label{eqn:LapInv}
   & (-\Delta_{\mathbb{S}^2})^{-1}v = u. \Longleftrightarrow  -\Delta_{\mathbb{S}^2} u = -u_{\theta\theta}-\frac{\cos{(\theta)}}{\sin{(\theta)}}u_{\theta}-\frac{1}{\sin{(\theta)}^2}u_{\phi\phi} = v.
\end{align}
We denote $\|(-\Delta_{\mathbb{S}^2})^{-1}\|_{L^2}$ as the optimal constant such that $\|(-\Delta_{\mathbb{S}^2})^{-1}f\|_{L^{2}}\leq C\|f\|_{2}$ \cite{Canuto_Springer2006}. Note that the constant $C$ is bounded and depends on $\Omega$ which is a simple consequence of the elliptic regularity for general smooth domains. One can also see Lemma \ref{lemma:discrete_L^2} for the boundedness of the discrete counterpart.

\subsection{Previous Work and Our Contributions}

Since the OK and NO models initially proposed by Ohta, Kawasaki, and Nakazawa \cite{Ohta_Kawasaki1986, Nakazawa_Ohta1993}, extensive theoretical and numerical studies have been focused on the OK and NO models in recent years. Nishiura and Ohnishi \cite{Nishiura_Ohnishi1995} and Ren and Wei \cite{Ren_Wei2000} presented a simpler analogy of a binary inhibitory system and characterized the minimizers of the OK model for diblock copolymers. Choksi \cite{Choksi2011} later conducted an asymptotic analysis of the OK model, establishing the existence of global minimizers and analyzing their properties, and recently Du, Scott, and Xu \cite{du2025ohta} studied the minimizers of a degenerate case of the Ohta–Kawasaki energy.
Additionally, Luo and Zhao \cite{LuoZhao_PhysicaD2024, LuoZhao_NumPDE2024} introduced a nonlocal OK model to investigate novel patterns at equilibrium states. For triblock copolymers, Ren and Wei \cite{Ren_Wei2003_1, Ren_Wei2003_2} derived the ternary system from the NO model and studied a family of local minimizers with lamellar structure. They further investigated the “double bubble” patterns in triblock copolymer systems \cite{RenWei_ARMA2013, RenWei_ARMA2015}. Building on this work, Wang, Ren, and Zhao \cite{Wang_Ren_Zhao2019} explored bubble assemblies in the NO model, examining how system parameters influence patterns both numerically and theoretically. Furthermore, Xu and Du \cite{xu2022ternary,xu2024ohta} studied the one-dimensional global minimizers for the NO model in different parameter regimes. 

Parallel to these theoretical studies, numerical methods were also developed for both OK and NO models. For instance, with the $H^{-1}$ gradient flow method, researchers applied the Cahn-Hilliard dynamics to study the dynamics and equilibrium states for the OK model. Methods such as the implicit midpoint spectral method \cite{Benesova_SINA2014} and the Implicit-Explicit Quadrature (IEQ) method \cite{ChengYangShen_JCP2017} have been applied. More recently, Xu and Zhao \cite{Xu_Zhao2019, Xu_Zhao2020} presented the $L^2$ gradient flow for the energy functional of the OK and NO models,  leading to the corresponding pACOK (\ref{eqn:pACOK}) and pACNO (\ref{eqn:pACNO}) equations to explore the equilibria and dynamics of the binary and ternary systems. Subsequently, Luo, Zhao, and Choi \cite{Choi_Zhao2021,LuoZhao_NumPDE2024} developed the stabilized second-order semi-implicit method combined with Fourier spectral methods to solve the pACOK and pACNO equations in the square domain. These methods are closely related to energy-stable approaches for gradient flow dynamics in various systems \cite{Chen_Conde2012, Hu_Wise2009, Shen_Yang2010, Shen_Wang2012, Wang_Wise2011, Wise_Wang2009, GengLiYeYang_AAMM2022, ChenQianSong_AAMM2023, LiTangZhou_AAMM2024, HouJuQiao_AAMM2024, du2019maximum, du2021maximum}. Recently, Luo and Zhao \cite{LuoZhao_AAMM2024} applied the ultraspherical spectral method \cite{Olver_Townsend2013} to study the OK and NO models in the disk domain, broadening their applications to new geometries.

In recent years, several articles \cite{Ortellado_FrontMater2020, Lee_CMA2022, ma2018wavenumber, cheng2020efficient} have discussed numerical methods for phase-field models on curved surfaces. Jeong and Kim \cite{jeong2015microphase} introduced a finite difference method to study microphase separation in diblock copolymers on curved surfaces using Cahn-Hilliard-type equations. Subsequently, Li, Kim, and Wang \cite{li2017unconditionally} developed a finite element method for solving these equations on various  curved surfaces, and Mohammadi \cite{mohammadi2020meshless} applied the generalized moving least squares (GMLS) technique for phase-field models on spherical surfaces. 

In this work, we focus on the numerical studies of the SOK and SNO models by using spectral method to solve the pACSOK and pACSNO dynamics. To the best of our knowledge, this is the first study to systematically investigate the patterns of the SOK and SNO models, which is essential for understanding the coarsening dynamics and equilibrium states on sphere. Notably, the numerical results for the SOK model effectively resemble patterns observed in multicomponent vesicles \cite{Baumgart_Nature2003}, as shown in Figure (\ref{fig:real_num}). This resemblance demonstrates that the SOK model can effectively predict various patterns at equilibrium for complex interfacial problems in practical applications.

\begin{figure}[htbp]
\begin{center}
\includegraphics[width=.35\textwidth]{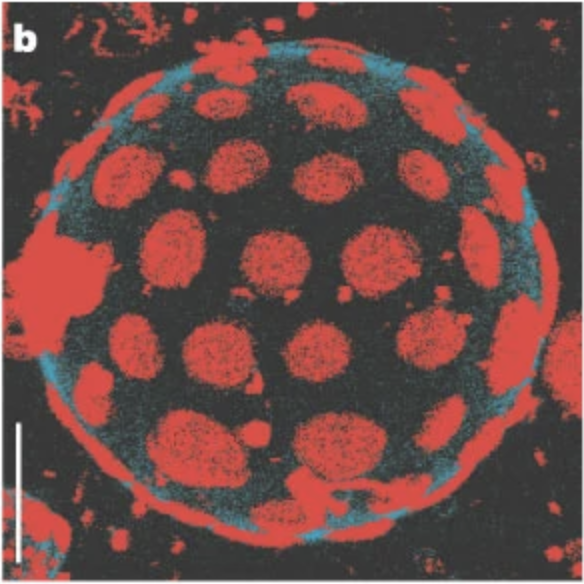} \ \ \ \ \ \ \ \ \ \
\includegraphics[width=.38\textwidth]{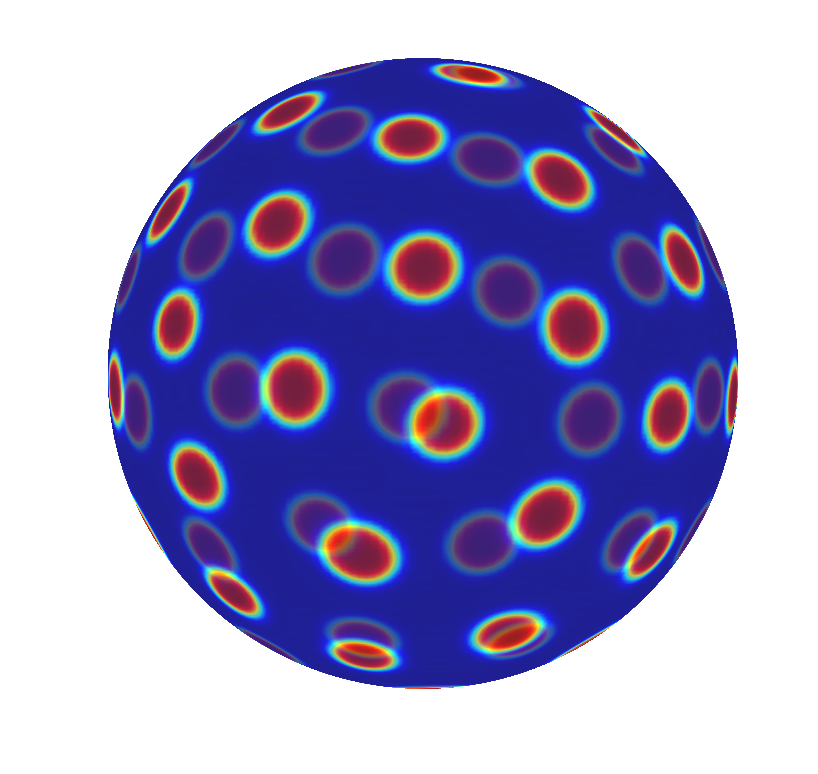}
\end{center}
\caption{Left: Experimental observation of an multicomponent vesicle membrane in \cite{Baumgart_Nature2003}; Right: Numerical simulation for the SOK model.}
\label{fig:real_num}
\end{figure}

The main contribution of our work is three-fold. Firstly, we develop numerical methods for the $L^2$ gradient flow dynamics of the SOK and SNO models to study the binary and ternary systems with long-range interactions on the spherical surface. This approach applies the double Fourier sphere (DFS) method \cite{Merilees_Atm1973} with low rank approximation \cite{TownsendWright_SJSC2013} for spatial discretization and uses the stabilized second-order semi-implicit scheme \cite{Choi_Zhao2021,LuoZhao_NumPDE2024} for time evolution. To our knowledge, this is the first attempt to systematically develop numerical schemes specifically for the SOK and SNO models. Secondly, we analyze the energy stability of the proposed numerical methods, and explore the coarsening dynamics and pattern formation at equilibrium for the SOK and SNO models, offering a solid foundation for further numerical and theoretical studies. Lastly, we systematically explore various self-assembled patterns for the SOK and SNO models.

The rest of the paper is organized as follows. Section \ref{sec:review} provides a review of the DFS method with low-rank approximation and outlines its application to solving the Helmholtz equation in spherical domains. In Section \ref{sec:energy_stability}, we present second-order BDF energy-stable schemes with both semi- and fully-discrete systems for the pACSOK and pACSNO equations and prove the energy stability. Section \ref{sec:numerical} presents numerical experiments for the coarsening dynamics and equilibrium states of the pACSOK and pACSNO systems. Meanwhile, quantitative analysis is conducted to investigate the two-thirds law effect on the long-range interaction strength in the SOK and SNO models. 

%


\section{Preliminary Review}\label{sec:review}
To numerically study the calculus of functions defined on the spherical surface, it is convenient to map these functions onto a rectangular domain using a longitude-latitude coordinate transform. For a function $f(x,y,z)$ in Cartesian coordinates, such a transform is expressed as follows:
\begin{align}
x = \cos{\phi}\sin{\theta}, \ \ \ y = \sin{\phi}\sin{\theta}, \ \ \ z = \cos{\theta}, \ \ \ \ \ (\phi,\theta) \in[-\pi,\pi]\times[0,\pi]. \label{eqn:coordinate_transform}
\end{align}
where $\phi$ is the azimuth angle and $\theta$ is the polar angle. With this transform, the function $f(x,y,z)$ on the unit sphere can be represented by $f(\phi,\theta)$ on the rectangular domain. However, there are two key challenges for this approach. Firstly, oversampling near the poles leads to redundant evaluations. Secondly, the transform does not preserve the periodicity of functions on the sphere in the  $\theta$-direction. 

Recently, Townsend, Wilber and Wright \cite{TownsendWright_SJSC2013} proposed a novel method to address these issues. Their approach combines the double Fourier sphere (DFS) method \cite{Merilees_Atm1973} with low rank approximation, preserving the periodicity of functions on the sphere while avoiding oversampling near the poles. The DFS method, first proposed by Merilees \cite{Merilees_Atm1973} and further developed in \cite{Fornberg1995,Orszag_MWR1974,Yee_MWR1980}, is a powerful and efficient numerical tool. It allows the use of fast Fourier transform (FFT) to map functions defined on the unit sphere onto a rectangular domain, while preserving periodicity in both $\phi$- and $\theta$-directions. This is achieved by doubling the polar angle $\theta$ from $[0, \pi]$ to $[-\pi, \pi]$, thereby transforming $f$ into the so-called Type-I block-mirror-centrosymmetric (BMC-I) function $\tilde{f}$ (refer to Definition 2.1 below). This BMC-I structure ensures that the function $\tilde{f}$ remains smooth and continuous at the poles while preserving its geometric properties.\begin{definition}[BMC-I function \cite{TownsendWright_SJSC2013}]\label{def:BMC-I}
A function $\tilde{f}:[-\pi,\pi]\times[-\pi,\pi]\to\mathbb{C}$ is a Type-I Block-Mirror-Centrosymmetric (BMC-I) function if there are functions $g,h:[0,\pi]\times[0,\pi]\to\mathbb{C}$ such that 
\begin{align}\label{BMC-structure}
\tilde{f} = \begin{bmatrix}
g & h \\
\mathrm{flip}(h) & \mathrm{flip}(g)
\end{bmatrix}
\end{align}
where $\mathrm{filp}$ is a glide reflection in group theory, and $\tilde{f}(\cdot,0) = \alpha$, $\tilde{f}(\cdot,\pi) = \beta$ and $\tilde{f}(\cdot,-\pi) = \gamma$, where $\alpha$, $\beta$ and $\gamma$ are constants. Note that (\ref{BMC-structure}) is called a BMC structure. 
\end{definition}
Next, they \cite{TownsendWright_SJSC2013} introduced the low-rank approximations of BMC-I functions that preserve the BMC-I structure by developing a structure-preserving iterative Gaussian elimination procedure (GE). By using GE, the low-rank approximation of $\tilde{f}(\phi,\theta)$ is given by
\begin{align}
\tilde{f}(\phi,\theta) \approx \sum_{j=1}^{K} a_jc_j(\phi)d_j(\theta), \nonumber
\end{align}
where $a_j$ is a coefficient related to the GE pivots, $c_j(\phi)$ and $d_j(\theta)$ are the $j$th column slice and row slice, respectively, constructed during the GE procedure, and $K$ is a relatively small rank of the approximation \cite{TownsendTrefethen_SJSC2013}. This low-rank approximation samples functions on the spherical surface at a sparse collection of slices that avoids oversampling near the poles, and is particularly effective for numerical operations such as point-wise evaluation, integration, and differentiation, simplifying them to essentially one-dimensional procedures. 

Using this idea, they \cite{TownsendTrefethen_SJSC2013} have built the computational frameworks for differentiation, integration, function evaluation, and vector calculus in spherical geometry, which are publicly available through the open source Chebfun package \cite{driscoll2014chebfun} written in MATLAB. In our work, we directly use this package to work with functions in the spherical domain. This includes evaluating spherical functions on mesh grids, converting spherical functions from and to double Fourier series, and computing integrals of spherical functions. 

\subsection{Fast Solver for Helmholtz Equation in Spherical Domain}\label{subsec:HelmEq_sphere}
Followed by our previous works in the square domain \cite{Choi_Zhao2021,LuoZhao_NumPDE2024,Xu_Zhao2019} and the disk domain \cite{LuoZhao_AAMM2024}, one of the key steps for solving the pACSOK (\ref{eqn:pACOK}) and pACSNO (\ref{eqn:pACNO}) systems is to find $(-\Delta_{\mathbb{S}^2})^{-1}u$ (\ref{eqn:LapInv}) before solving the fully discrete schemes in each time step. To achieve this, an optimal solver for the Helmholtz equation on the sphere can be developed by using the DFS method, detailed in \cite{Cheong_JCP2000,Shen_SISC1999,Wilber_thesis2016}.

Consider the Helmholtz equation 
$$-\Delta_{\mathbb{S}^2} u + \alpha u  = f$$
in spherical coordinates, where $\alpha\ge0$ is a constant. First, we need the zero-mean condition on $f$ to specify a unique solution, i.e. $\int_{0}^{\pi}\int_{-\pi}^{\pi}f(\phi,\theta)\sin{(\theta)}\ \text{d}\phi\text{d}\theta = 0$. Then, we extend the spherical domain from $\Omega = [-\pi,\pi] \times [0,\pi]$ to $\Tilde{\Omega} = [-\pi,\pi] \times [-\pi,\pi]$, resulting in the extended Helmholtz equation  $-\Delta_{\mathbb{S}^2} \tilde{u} + \alpha \tilde{u}  = \tilde{f}$. Here, $\tilde{f}$ is the BMC-I extension of $f$ from (\ref{def:BMC-I}), and the equation can be represented by     
\begin{equation}\label{eqn:2Dhelmtz}
\begin{cases}
  -\sin^2{(\theta)} \frac{\partial^2 \tilde{u}}{\partial \theta^2} - \sin{(\theta)}\cos{(\theta)}\frac{\partial \tilde{u}}{\partial \theta} - \frac{\partial^2\tilde{u}}{\partial \phi^2} + \alpha\sin^2{(\theta)}\tilde{u} = \sin^2{(\theta)}\tilde{f}, & (\phi,\theta)\in \Tilde{\Omega} = [-\pi,\pi]^2, \\
  \tilde{u} \text{ is periodic in }\phi, \theta, & 
\end{cases}
\end{equation}
where the multiplication by $\sin^2{(\theta)}$ aims to avoid the singularity at $\theta=0, \pm\pi$. The actual solution $u$ can be given by restricting $\tilde{u}$ back to $\Omega = [-\pi,\pi] \times [0,\pi]$. Additionally, function $\tilde{u}$ is also of zero mean:
\begin{align}
\int_{0}^{\pi}\int_{-\pi}^{\pi}\tilde{u}(\phi,\theta)\sin{(\theta)}\ \text{d}\phi\text{d}\theta = 0. \label{eqn:zero_mean}  
\end{align}
By using the DFS method, the solution $\tilde{u}$ in (\ref{eqn:2Dhelmtz}) can be represented by 
\begin{align}\label{eqn:CF_exp_BMCII}
    \tilde{u}(\phi,\theta) \approx \sum_{l=-\frac{N_{\phi}}{2}}^{\frac{N_{\phi}}{2}-1}\sum_{k=-\frac{N_{\theta}}{2}}^{\frac{N_{\theta}}{2}-1}\hat{u}_{kl}e^{\text{i}k\theta}e^{\text{i}l\phi},
\end{align}
where $\hat{u}_{kl}$ with $k =  -\frac{N_{\theta}}{2}:\frac{N_{\theta}}{2}-1$, $l =  -\frac{N_{\phi}}{2}:\frac{N_{\phi}}{2}-1$ are the double Fourier coefficients, and $N_{\theta}$ and $N_{\phi}$ are both positive even integers defining the number of uniform mesh points. In order to find the coefficient matrix $\hat{\mathbf{u}} = \{\hat{u}_{kl}\}$, we write the function $\tilde{u}$ and $\tilde{f}$ as follows:
\begin{align}
 \tilde{u}(\phi,\theta) \approx \sum_{l=-\frac{N_{\phi}}{2}}^{\frac{N_{\phi}}{2}-1}\hat{u}_{l}(\theta)e^{\text{i}l\phi}, \ \ \ \tilde{f}(\phi,\theta) \approx \sum_{l=-\frac{N_{\phi}}{2}}^{\frac{N_{\phi}}{2}-1}\hat{f}_{l}(\theta)e^{\text{i}l\phi}, \nonumber
\end{align}
where the functions $\hat{u}_{l}(\theta)$ and $\hat{f}_{l}(\theta)$ are given by
\begin{align}
\hat{u}_{l}(\theta) = \sum_{k=-\frac{N_{\theta}}{2}}^{\frac{N_{\theta}}{2}-1}\hat{u}_{kl}e^{\text{i}k\theta}, \ \ \ \hat{f}_{l}(\theta) = \sum_{k=-\frac{N_{\theta}}{2}}^{\frac{N_{\theta}}{2}-1}\hat{f}_{kl}e^{\text{i}k\theta}, \ \ \ l =  -\frac{N_{\phi}}{2}:\frac{N_{\phi}}{2}-1. \nonumber
\end{align}
With these functions, the equation (\ref{eqn:2Dhelmtz}) separates to a system for each index $l = -\frac{N_{\phi}}{2}:\frac{N_{\phi}}{2}-1$ as
\begin{align}
-\sin^2{(\theta)}\frac{\partial^2 \hat{u}_{l}(\theta)}{\partial \theta^2} - \sin{(\theta)}\cos{(\theta)} \frac{\partial \hat{u}_{l}(\theta)}{\partial \theta} + (l^2+\alpha\sin^2{(\theta)})\hat{u}_{l}(\theta) = \sin^2{(\theta)}\hat{f}_{l}(\theta), \nonumber 
\end{align}
that yields a linear system of $N_{\phi}$ equations as follows
\begin{align}
\Big[-\mathcal{M}[\sin{(\theta)}]^2\mathcal{D}^2 - \mathcal{M}[\sin{(\theta)}]\mathcal{M}[\cos{(\theta)}]\mathcal{D} + (l^2+\alpha\mathcal{M}[\sin{(\theta)}])\Big]\hat{\mathbf{u}}_{l} = \mathcal{M}[\sin{(\theta)}]\hat{\mathbf{f}}_{l},  \label{eqn:linear_sys}
\end{align}
where $\hat{\mathbf{u}}_{l} = [\hat{u}_{kl}]^\mathrm{T}$ and $\hat{\mathbf{f}}_{l} = [\hat{f}_{kl}]^\mathrm{T}$ are the vectors of coefficients in double Fourier basis with $k = -\frac{N_{\theta}}{2}:\frac{N_{\theta}}{2}-1$. The multiplication matrices $\mathcal{D}$, $\mathcal{M}[\sin{(\theta)}]$ and $\mathcal{M}[\cos{(\theta)}]$ are given by 
\begin{align}
& \mathcal{M}[\sin{(\theta)}]  = \mathrm{tridiag}\left(-\frac{\text{i}}{2},0,\frac{\text{i}}{2}\right), \nonumber \\
&\mathcal{M}[\cos{(\theta)}]  = \mathrm{tridiag}\left(\frac{1}{2},0,\frac{1}{2}\right), \nonumber\\
& \mathcal{D} = \mathrm{diag}\left(-\frac{\text{i}N_{\theta}}{2}, \dots, -\text{i}, 0, \text{i}, \dots,\frac{\text{i}(N_{\theta}-2)}{2} \right). \nonumber
\end{align}
The above matrices are developed by the derivative and multiplication operators on the Fourier basis as
\begin{align} 
& \sin{(\theta)} \hat{u}_{l}(\theta) =  \sum_{k=-\frac{N_{\theta}}{2}}^{\frac{N_{\theta}}{2}-1}\frac{-\text{i}\hat{u}_{k-1,l}+\text{i}\hat{u}_{k+1,l}}{2}e^{\text{i}k\theta}, \nonumber\\
& \cos{(\theta)} \hat{u}_{l}(\theta) = \sum_{k=-\frac{N_{\theta}}{2}}^{\frac{N_{\theta}}{2}-1}\frac{\hat{u}_{k-1,l}+\hat{u}_{k+1,l}}{2}e^{\text{i}k\theta},     \nonumber \\
& \frac{\partial \hat{u}_{l}(\theta)}{\partial \theta} = \sum_{k=-\frac{N_{\theta}}{2}}^{\frac{N_{\theta}}{2}-1}\text{i}k\hat{u}_{kl}e^{\text{i}k\theta}. \nonumber
\end{align}
Furthermore, we also need to impose the zero-mean condition (\ref{eqn:zero_mean}) at zero-th mode of the linear system (\ref{eqn:linear_sys}) to ensure the uniqueness of the solution. Since the condition (\ref{eqn:zero_mean}) can be approximated by 
\begin{align}
\int_{0}^{\pi}\int_{-\pi}^{\pi}\tilde{u}(\phi,\theta)\sin{(\theta)}\ \text{d}\phi\text{d}\theta \approx 2\pi\sum_{k=-\frac{N_{\theta}}{2}}^{\frac{N_{\theta}}{2}-1}\hat{u}_{k0}\frac{1+e^{\text{i}k\pi}}{1-k^2}, \quad k\neq \pm 1, \nonumber
\end{align}
we define a vector $\mathbf{v} = [v_{k}]$ with $k = -\frac{N_{\theta}}{2}:\frac{N_{\theta}}{2}-1$, where $v_{\pm 1} = 0$ and $v_{k} = \frac{1+e^{\text{i}k\pi}}{1-k^2}$ for $k\neq\pm1$. Then the zero-mean condition can be imposed by $2\pi\mathbf{v}^\mathrm{T}\hat{\mathbf{u}}_{0} = 0$ to the linear system (\ref{eqn:linear_sys}) at $l=0$ by replacing the zeroth row of the matrix $M =\left[-\mathcal{M}[\sin{(\theta)}]^2\mathcal{D}^2 - \mathcal{M}[\sin{(\theta)}]\mathcal{M}[\cos{(\theta)}]\mathcal{D} + (l^2+\alpha\mathcal{M}[\sin{(\theta)}])\right]$ as $M_{-0}$,
\begin{align}
\begin{bmatrix}
   \mathbf{v}  \\
    M_{-0}
    \end{bmatrix}\hat{\mathbf{u}}_{0} =\begin{bmatrix}
   0  \\
    \hat{\mathbf{f}}_{0,-0}
    \end{bmatrix}. \label{eqn:lin_sys0}
\end{align}
At last, we directly apply the ‘backslash' in MATLAB to solve the $N_{\phi}$ linear systems (\ref{eqn:linear_sys}, \ref{eqn:lin_sys0}), and find the double Fourier coefficients of solution $\tilde{u}$. Detailed application can be found in Section (\ref{subsec:FullyScheme}).


\section{Second Order Time-discrete Energy Stable Scheme}\label{sec:energy_stability}

In this section, in order to systematically explore the coarsening dynamics and the equilibrium states for SOK and SNO models, we propose the semi-discrete (discrete in time) and fully-discrete (discrete in both time and space) schemes for the pACSOK (\ref{eqn:pACOK}) and pACSNO (\ref{eqn:pACNO}) equations and study the energy stability of these proposed schemes. In subsection (\ref{subsec:semi}), we develop the time-discrete scheme by applying the modified second-order BDF method (BDF2) for the temporal discretization.  Further applying the DFS method for the spatial discretization, we obtain the fully-discrete scheme in subsection (\ref{subsec:fully}).

\subsection{Second Order Semi-discrete Scheme for pACSOK and  pACSNO Systems}\label{subsec:semi}

For the time-discrete scheme, we propose a second-order semi-implicit scheme for the pACSOK system (\ref{eqn:pACOK}). Given the initial value  $u^0 = u(0)$ and a uniform step size $\tau$, and adopting the idea from \cite{hou2023linear}, we apply the first-order BDF (BDF1) scheme \cite{Xu_Zhao2019} for the pACSOK system to compute the first-step solution $u^1$ as
\begin{align}\label{eqn:OK_1st_order_scheme}
\frac{u^{1}-u^{0}}{\tau_1}  = \ & \epsilon\Delta_{\mathbb{S}^2} u^{1} - \frac{1}{\epsilon}W'(u^{0})  -\frac{\kappa^{*}}{\epsilon}\left(u^{1}-u^{0}\right) -\gamma\beta^{*}(-\Delta_{\mathbb{S}^2})^{-1}\left(u^{1}-u^{0}\right) \nonumber\\
    & -\gamma(-\Delta_{\mathbb{S}^2})^{-1}\left(u^0-\omega\right) -M\left[\int_{\Omega} u^{0}\ \text{d}x - \omega|\Omega|\right],
 \end{align}
where $\kappa^{*},\beta^{*} \geq0$ are constant stabilizers and the time step size $\tau_1$ for the first iteration is taken as $\tau_1= \min\{\tau^2,1\}$. By subtracting the BDF1 scheme (\ref{eqn:OK_1st_order_scheme}) from the pACSOK system (\ref{eqn:pACOK}) with $u(\tau_1)$ as the exact solution at time $t = \tau_1$ and taking $L^2$ inner product on both sides by $e^{1}:= u(\tau_1)-u^1$, the following error estimate is obtained, as shown in \cite{Xu_Zhao2019}:
\begin{align}
\|u(\tau_1)-u^1\|_{L^2} = O(\tau_1).
\end{align}
Therefore, the choice of $\tau_1= \min\{\tau^2,1\}$ ensures that the error at the first step remains $O(\tau^2)$, thereby preserving the second-order accuracy in time at any later time when adopting BDF2 after the first step.

Next, we introduce the BDF2 method for the pACSOK system (\ref{eqn:pACOK}), using the given uniform step size $\tau$, the initial condition $u^0$ and the precomputed $u^1$. To ensure energy stability, we choose the stabilizer 
\begin{align*}
    u^{n+1}-2u^{n}+u^{n-1}.
\end{align*}
The approximate solution $u^{n+1}$ for $n=1,2,\cdots,N-1$ is then given by:
\begin{align}\label{eqn:pACOK_scheme}
    \frac{3u^{n+1}-4u^{n}+u^{n-1}}{2\tau}  = \ & \epsilon\Delta_{\mathbb{S}^2} u^{n+1} - \frac{1}{\epsilon}\left[2W'(u^n)-W'(u^{n-1})\right] \nonumber\\
    & -\frac{\kappa}{\epsilon}\left(u^{n+1}-2u^{n}+u^{n-1}\right) -\gamma\beta(-\Delta_{\mathbb{S}^2})^{-1}\left(u^{n+1}-2u^{n}+u^{n-1}\right) \nonumber\\
    & -\gamma(-\Delta_{\mathbb{S}^2})^{-1}\left[2u^n-u^{n-1}-\omega\right] -M\left[\int_{\Omega}\left(2u^n-u^{n-1}\right)\ \text{d}x - \omega|\Omega|\right].
\end{align}
Here, $\frac{\kappa}{\epsilon}\left(u^{n+1}-2u^{n}+u^{n-1}\right)$ and $\gamma\beta(-\Delta_{\mathbb{S}^2})^{-1}\left(u^{n+1}-2u^{n}+u^{n-1}\right)$ control the energy stability of the system and $\kappa, \beta\geq0$ are stabilization constants. 

Similarly, for the pACSNO system (\ref{eqn:pACNO}) of the SNO model, we also use the BDF2 scheme with a uniform time step size $\tau$ over time interval $[0,T]$ and an integer $N$. Given the initial data $u_i^{0} =u_{i,0}$ for $i = 1,2$, the first-step solution $u_i^{1}$, $i = 1,2$ is computed by the BDF1 scheme with $\tau_1 = \min\{\tau^2,1\}$ by following a similar approach used in the pACSOK system
\begin{align}\label{eqn:NO_1st_order_scheme}
        \frac{u^{1}_i-u^{0}_i}{\tau_1}  = &\ \epsilon\Delta_{\mathbb{S}^2} u^{1}_i + \frac{\epsilon}{2}\Delta_{\mathbb{S}^2} u^{0+\text{mod}(i+1,2)}_{j} - \frac{1}{\epsilon}\frac{\partial W_2}{\partial u_{i}}(u^{0+\text{mod}(i+1,2)}_1,u^{0}_{2}) \nonumber \\
    &  -\frac{\kappa_{i}^{*}}{\epsilon}\left(u^{1}_i-u^{0}_i\right)  - \gamma_{ii}\beta_{i}^{*}(-\Delta_{\mathbb{S}^{2}})_{}^{-1}\left(u^{1}_i-u^{0}_i\right)  \nonumber\\
    & -\gamma_{ii}(-\Delta_{\mathbb{S}^{2}})_{u}^{-1}\left[u^0_i-\omega_i\right]  -\gamma_{ij}(-\Delta_{\mathbb{S}^{2}})_{u}^{-1}\left[u^{0+\text{mod}(i+1,2)}_j-\omega_j\right]\nonumber\\
    & -M_i\left[\langle u_i^0,1\rangle - \omega_i|\Omega_{}|\right].
\end{align}
Then, the approximate solutions $u^{n+1}_i$, $i=1,2$ for $n = 1,2,\cdots,N-1$ can be found by
\begin{align}\label{eqn:pACNO_scheme}
    &\frac{3u^{n+1}_i-4u^{n}_i+u^{n-1}_i}{2\tau} \nonumber\\ 
    =\ & \epsilon\Delta_{\mathbb{S}^2} u^{n+1}_i + \frac{\epsilon}{2}\left(2\Delta_{\mathbb{S}^2} u^{n+\text{mod}(i+1,2)}_{j}-\Delta_{\mathbb{S}^2} u^{n-1+2\cdot\text{mod}(i+1,2)}_{j}\right) \nonumber \\
    & - \frac{1}{\epsilon}\left[2\frac{\partial W_2}{\partial u_{i}}(u^{n+\text{mod}(i+1,2)}_1,u^{n}_{2})-\frac{\partial W_2}{\partial u_{i}}(u^{n-1+2\cdot\text{mod}(i+1,2)}_1,u^{n-1}_{2})\right] \nonumber \\
    & -\frac{\kappa_i}{\epsilon}\left(u^{n+1}_i-2u^{n}_i+u^{n-1}_i\right) - \gamma_{ii}\beta_{i}(-\Delta_{\mathbb{S}^2})^{-1}\left(u^{n+1}_i-2u^{n}_i+u^{n-1}_i\right)  \nonumber\\
    & -\gamma_{ii}(-\Delta_{\mathbb{S}^2})^{-1}\left[2u^n_i-u^{n-1}_i-\omega_i\right]  -\gamma_{ij}(-\Delta_{\mathbb{S}^2})^{-1}\left[2u^{n+\text{mod}(i+1,2)}_j-u^{n-1+2\cdot\text{mod}(i+1,2)}_j-\omega_j\right]\nonumber\\
    & -M_i\left[\int_{\Omega}\left(2u^n_i-u^{n-1}_i\right)\ \text{d}x - \omega_i|\Omega|\right],
\end{align}
for $i = 1,2$ and $j\neq i$, and $\kappa_1,\kappa_2,\beta_1,\beta_1\geq0$ are stabilization constants with the stabilizer 
\begin{align*}
    u^{n+1}_i-2u^{n}_i+u^{n-1}_i,\ \ i = 1,2.
\end{align*} 
To solve for $u_1^{n+1}$ at $(n+1)$-step, the $\operatorname{mod}$ function is used to extrapolate $u_2$ at $(n-1)$ and $(n)$-steps as $2u_2^{n} - u_2^{n-1}$. In contrast, once $u_1^{n+1}$ is known, the $\operatorname{mod}$ function selects $u_1^{n+1}$ to obtain $u_2^{n+1}$. The $\operatorname{mod}$ function is designed to preserve the second-order convergence rate of the BDF2 scheme, as shown in \cite{Choi_Zhao2021}.

Next, we aim to study the energy stability for the proposed schemes (\ref{eqn:pACOK_scheme}, \ref{eqn:pACNO_scheme}) of the pACSOK and pACSNO equations. We need the Lipschitz constant $L_{W''}$ for the second order derivative of the double well potential $W''$ and the $L^2$ bound of the inverse Laplacian operator $\|(-\Delta_{\mathbb{S}^2})^{-1}\|_{L^2}$ in (\ref{subsec:notation}) to prove the energy stability. 

\begin{theorem}\label{thm:OK/NO_energy} 
For pACSOK equation, we define a modifying energy functional $\tilde{E}^{\mathrm{pSOK}}[u^{n},u^{n-1}]$ for the SOK model
 \begin{align}\label{eqn:OK_modifiedenergy}
\tilde{E}^{\mathrm{pSOK}}(u^{n},u^{n-1} ) = E^{\mathrm{pSOK}}(u^n)+\left(\frac{\kappa}{2\epsilon}+\frac{1}{4\tau}+C\right)\|u^{n}-u^{n-1}\|_{L^2}^2+\frac{\gamma \beta}{2}\|(-\Delta_{\mathbb{S}^2})^{-\frac{1}{2}}\left(u^{n}-u^{n-1}\right)\|_{L^2}^2,
\end{align}
where $\kappa,\beta\geq0$, and the constant $C$ is given by 
\begin{align}\label{cst:pACOK_cont}
    C=\frac{L_{W''}}{2\epsilon}+\frac{\gamma}{2}\|(-\Delta_{\mathbb{S}^2})^{-1}\|_{L^2}+\frac{M}{2}|\Omega|.
\end{align}
Let $\{u^{n}\}_{n=1}^N$ be generated by scheme (\ref{eqn:pACOK_scheme}), and the time step size $\tau\leq\frac{1}{3C}$, then
\begin{align*}
    \tilde{E}^{\mathrm{pSOK}}[u^{n+1},u^{n}]\leq  \tilde{E}^{\mathrm{pSOK}}[u^{n},u^{n-1}].
\end{align*}
For pACSNO systems, we define the modifying energy functional $ \tilde{E}^{\mathrm{pSNO}}[u_1^{n},u_1^{n-1},u_2^{n},u_2^{n-1}]$ as 
\begin{align}\label{eqn:mod_energy_pNO}
     \tilde{E}^{\mathrm{pSNO}}[u_1^{n},&u_1^{n-1},u_2^{n},u_2^{n-1}]=  \nonumber\\
    & E^{\mathrm{pSNO}}[u_1^{n},u_2^{n}]+\sum_{i=1}^{2}\left[\left(\frac{\kappa_i}{2}+\frac{1}{4\tau}+C_i\right)\|u_i^{n}-u_i^{n-1}\|_{L^2}^2+\frac{\gamma_{ii} \beta_{i}}{2}\|(-\Delta_{\mathbb{S}^2})^{-\frac{1}{2}}(u_i^{n}-u_i^{n-1})\|^2_{L^2}\right],
\end{align}
where $\kappa_i,\beta_i\geq0$, $i=1,2$, and the constants $C_i$ are given by 
\begin{align}\label{cst:pNO}
C_i = \frac{L_{W''}}{2\epsilon}+\frac{\gamma_{i1}+\gamma_{i2}}{2} \|(-\Delta_{\mathbb{S}^2})^{-1}\|_{L^2}+\frac{M_i}{2}|\Omega|.
\end{align}
Let $\{u_1^{n},u_2^{n}\}_{n=1}^N$ be generated by scheme (\ref{eqn:pACNO_scheme}), and the time step size $\tau\leq \min\{\frac{1}{3C_1},\frac{1}{3C_2}\}$, then
\begin{align*}
     \tilde{E}^{\mathrm{pSNO}}[u_1^{n+1},u_1^{n},u_2^{n+1},u_2^{n}]\leq  \tilde{E}^{\mathrm{pSNO}}[u_1^{n},u_1^{n-1},u_2^{n},u_2^{n-1}],
\end{align*}
\end{theorem}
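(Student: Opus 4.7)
The plan is to prove both statements by the standard test-against-time-difference strategy: take the $L^{2}$ inner product of scheme (\ref{eqn:pACOK_scheme}) with $\delta^{n+1}:=u^{n+1}-u^{n}$ (and the analogue $u_i^{n+1}-u_i^{n}$ for SNO), estimate each term, and collect everything into a telescoping inequality for the modifying energy. The backbone is the BDF2 inner-product identity
\begin{align*}
\Bigl(\tfrac{3u^{n+1}-4u^{n}+u^{n-1}}{2\tau},\,\delta^{n+1}\Bigr)_{L^{2}}
= \tfrac{\|\delta^{n+1}\|_{L^{2}}^{2}-\|\delta^{n}\|_{L^{2}}^{2}}{4\tau}+\tfrac{\|\delta^{n+1}\|_{L^{2}}^{2}}{\tau}+\tfrac{\|u^{n+1}-2u^{n}+u^{n-1}\|_{L^{2}}^{2}}{4\tau},
\end{align*}
which follows from $3u^{n+1}-4u^{n}+u^{n-1}=3\delta^{n+1}-\delta^{n}$ together with polarization. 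The first summand telescopes exactly into the $\tfrac{1}{4\tau}\|u^{n}-u^{n-1}\|_{L^{2}}^{2}$ piece of $\tilde E^{\mathrm{pSOK}}$; the middle summand $\tfrac{1}{\tau}\|\delta^{n+1}\|_{L^{2}}^{2}$ is the ``budget'' into which all explicit-term errors will be absorbed.

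Each right-hand-side term is then processed, using the polarization identity $2(a,a-b)=\|a\|^{2}-\|b\|^{2}+\|a-b\|^{2}$ wherever applicable. The implicit $\epsilon\Delta_{\mathbb{S}^{2}}u^{n+1}$ gives a $\tfrac{\epsilon}{2}\|\nabla_{\mathbb{S}^{2}}u^{n}\|_{L^{2}}^{2}$ telescope plus a non-negative $\tfrac{\epsilon}{2}\|\nabla_{\mathbb{S}^{2}}\delta^{n+1}\|_{L^{2}}^{2}$ reserve; the $\kappa/\epsilon$ and $\gamma\beta$ stabilizers applied to $u^{n+1}-2u^{n}+u^{n-1}$ yield, respectively, the $\tfrac{\kappa}{2\epsilon}(\|\delta^{n+1}\|_{L^{2}}^{2}-\|\delta^{n}\|_{L^{2}}^{2})$ and the $\tfrac{\gamma\beta}{2}$-weighted $(-\Delta_{\mathbb{S}^{2}})^{-1/2}$ telescopes that appear in (\ref{eqn:OK_modifiedenergy}), plus further non-negative reserves. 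The nonlinear extrapolation is split as $2W'(u^{n})-W'(u^{n-1})=W'(u^{n})+[W'(u^{n})-W'(u^{n-1})]$: Taylor's theorem on the first piece produces $\tfrac{1}{\epsilon}\int[W(u^{n+1})-W(u^{n})]\,dx$ plus a remainder controlled by $\tfrac{L_{W''}}{2\epsilon}\|\delta^{n+1}\|_{L^{2}}^{2}$, while the mean-value theorem plus Young's inequality on the second piece contribute $\tfrac{L_{W''}}{2\epsilon}(\|\delta^{n+1}\|_{L^{2}}^{2}+\|\delta^{n}\|_{L^{2}}^{2})$. With $v^{n}:=u^{n}-\omega$ the long-range extrapolation becomes $-\gamma(-\Delta_{\mathbb{S}^{2}})^{-1}(v^{n}+\delta^{n})$: the $v^{n}$-part telescopes the $\tfrac{\gamma}{2}\|(-\Delta_{\mathbb{S}^{2}})^{-1/2}(u^{n}-\omega)\|_{L^{2}}^{2}$ part of $E^{\mathrm{pSOK}}$ while leaving a $\tfrac{\gamma}{2}\|(-\Delta_{\mathbb{S}^{2}})^{-1/2}\delta^{n+1}\|_{L^{2}}^{2}$ residual, and the $\delta^{n}$-part is Young-bounded in the same inner product; using $\|(-\Delta_{\mathbb{S}^{2}})^{-1/2}w\|_{L^{2}}^{2}\le\|(-\Delta_{\mathbb{S}^{2}})^{-1}\|_{L^{2}}\|w\|_{L^{2}}^{2}$ converts these to the net bad contribution $\gamma\|(-\Delta_{\mathbb{S}^{2}})^{-1}\|_{L^{2}}\|\delta^{n+1}\|_{L^{2}}^{2}+\tfrac{\gamma}{2}\|(-\Delta_{\mathbb{S}^{2}})^{-1}\|_{L^{2}}\|\delta^{n}\|_{L^{2}}^{2}$. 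Finally, writing $p^{m}:=\int u^{m}\,dx-\omega|\Omega|$ and using the Cauchy-Schwarz bound $|p^{n+1}-p^{n}|^{2}\le|\Omega|\,\|\delta^{n+1}\|_{L^{2}}^{2}$, the penalty term telescopes the $\tfrac{M}{2}(p^{n})^{2}$ piece of $E^{\mathrm{pSOK}}$ and contributes $M|\Omega|\|\delta^{n+1}\|_{L^{2}}^{2}+\tfrac{M|\Omega|}{2}\|\delta^{n}\|_{L^{2}}^{2}$ of bad terms.

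Assembling everything, the total bad coefficient on $\|\delta^{n+1}\|_{L^{2}}^{2}$ sums to exactly $2C$ and on $\|\delta^{n}\|_{L^{2}}^{2}$ to exactly $C$, with $C$ as in (\ref{cst:pACOK_cont}). Adding $C(\|\delta^{n+1}\|_{L^{2}}^{2}-\|\delta^{n}\|_{L^{2}}^{2})$ to both sides absorbs the $C\|\delta^{n}\|_{L^{2}}^{2}$ into the $C$-telescope of $\tilde E^{\mathrm{pSOK}}$ and leaves $3C\|\delta^{n+1}\|_{L^{2}}^{2}$ on the right, which is dominated by the BDF2 budget $\tfrac{1}{\tau}\|\delta^{n+1}\|_{L^{2}}^{2}$ precisely when $\tau\le 1/(3C)$; this closes $\tilde E^{\mathrm{pSOK}}[u^{n+1},u^{n}]\le\tilde E^{\mathrm{pSOK}}[u^{n},u^{n-1}]$. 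For pACSNO, the same procedure is carried out on each equation $i=1,2$ and the two inequalities summed; the symmetric cross-gradient $\tfrac{\epsilon}{2}\Delta_{\mathbb{S}^{2}}u_{j}$, together with the $\operatorname{mod}$-function device from \cite{Choi_Zhao2021} that selects between implicit and extrapolated values, produces after summation in $i$ the $\tfrac{\epsilon}{2}\int\nabla_{\mathbb{S}^{2}}u_{1}\cdot\nabla_{\mathbb{S}^{2}}u_{2}\,dx$ telescope in $E^{\mathrm{pSNO}}$, and the cross long-range kernel $\gamma_{ij}(-\Delta_{\mathbb{S}^{2}})^{-1}(u_{j}-\omega_{j})$ is handled exactly as the diagonal one (which is why $C_{i}$ carries $\gamma_{i1}+\gamma_{i2}$). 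The principal technical subtlety, and the hard part of the proof, is the precise matching that makes the bad coefficients come out to $2C$ and $C$ (respectively $2C_{i}$ and $C_{i}$): every explicit term must be decomposed so that its implicit-like part yields a clean telescope into $E^{\mathrm{pSOK}}$ (or $E^{\mathrm{pSNO}}$) while its difference-like part is symmetrically split between $\|\delta^{n+1}\|^{2}$ and $\|\delta^{n}\|^{2}$, and this is what pins down the explicit form of $C$ in (\ref{cst:pACOK_cont}) and the threshold $\tau\le 1/(3C)$.
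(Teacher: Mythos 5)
Your proposal is correct and follows essentially the same route as the paper's proof: testing against $u^{n+1}-u^{n}$, using the BDF2 polarization identity, bounding the extrapolated nonlinear, long-range, and penalty terms via Taylor/mean-value plus Young's inequality to arrive at bad coefficients $2C$ on $\|u^{n+1}-u^{n}\|_{L^2}^2$ and $C$ on $\|u^{n}-u^{n-1}\|_{L^2}^2$, then adding $C(\|u^{n+1}-u^{n}\|_{L^2}^2-\|u^{n}-u^{n-1}\|_{L^2}^2)$ to both sides and absorbing $3C\|u^{n+1}-u^{n}\|_{L^2}^2$ into the $\frac{1}{\tau}$ budget under $\tau\le\frac{1}{3C}$. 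The paper likewise omits the detailed pACSNO computation, so your sketch of the ternary case matches its level of treatment.
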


\begin{proof}
The proofs of the energy stability for the pACSOK and pACSNO systems are similar. The main difference is that for the pACSNO system, we deal with $i=1,2$ separately and then add them up. Therefore, we omit the proof for the pACSNO system and provide a detailed proof for the pACSOK system. Note that this proof is similar to the one for the disk domain \cite{LuoZhao_AAMM2024}.

Taking $L^2$ inner product with respect to $u^{n+1}-u^n$ on the two sides of (\ref{eqn:pACOK_scheme}), and using the identities $a\cdot(a-b)=\frac{1}{2}a^2-\frac{1}{2}b^2+\frac{1}{2}(a-b)^2$ and $b\cdot(a-b)=\frac{1}{2}a^2-\frac{1}{2}b^2-\frac{1}{2}(a-b)^2$, with $a=u_i^{n+1}-u_i^{n}$, $b=u_i^{n}-u_i^{n-1}$, we obtain two sides as: 
\begin{align}
  \text{ LHS}  =& \ \frac{1}{2\tau}\left\langle2(u^{n+1}-u^n), u^{n+1}-u^n\right\rangle+\frac{1}{2\tau}\left\langle u^{n+1}-2u^{n}+u^{n-1},u^{n+1}-u^n\right\rangle \nonumber\\ 
  = &\ \frac{1}{\tau}\|u_{}^{n+1}-u_{}^{n}\|_{L^2}^2+\frac{1}{4\tau}\left(\|u_{}^{n+1}-u_{}^{n}\|_{L^2}^2-\|u_{}^{n}-u_{}^{n-1}\|_{L^2}^2+\|u_{}^{n+1}-2u_{}^{n}+u_{}^{n-1}\|_{L^2}^2\right) \nonumber \\
  \text{RHS} = & -\frac{\epsilon}{2}\left(\left\langle-\Delta_{\mathbb{S}^2} u_{}^{n+1},u_{}^{n+1}\right\rangle-\left\langle-\Delta_{\mathbb{S}^2} u_{}^{n},u_{}^{n}\right\rangle +\left\langle-\Delta_{\mathbb{S}^2}(u_{}^{n+1}-u_{}^{n}),u_{}^{n+1}-u_{}^{n}\right\rangle  \right) \nonumber\\
    & \underbrace{- \frac{1}{\epsilon}\left\langle 2W'(u^{n}) - W'(n^{n-1}), u_1^{n+1} - u_{}^{n}\right\rangle}_{\text{I}} \nonumber\\
    & -\frac{\kappa_{}}{2\epsilon}\left(\|u_{}^{n+1}-u_{}^n\|_{L^2}^2-\|u_{}^{n}-u_{}^{n-1}\|_{L^2}^2+\|u_{}^{n+1}-2u_{}^n+u_{}^{n-1}\|_{L^2}^2 \right) \nonumber\\
    & -\frac{\gamma_{} \beta_{}}{2}\left(\|(-\Delta_{\mathbb{S}^2})^{-\frac{1}{2}}(u^{n+1}-u^{n})\|_{L^2}^2-\|(-\Delta_{\mathbb{S}^2})^{-\frac{1}{2}}(u^{n}-u^{n-1})\|_{L^2}^2 + \|(-\Delta_{\mathbb{S}^2})^{-\frac{1}{2}}(u^{n+1}-2u^{n}+u^{n-1})\|_{L^2}^2\right) \nonumber\\
    & \underbrace{-\gamma_{}\left\langle(-\Delta_{\mathbb{S}^2})^{-1}\left(2u^{n}-u^{n-1}-\omega\right),u^{n+1}-u^{n}\right\rangle}_{\text{II}}\nonumber\\
    & \underbrace{-M_1\left(\int_{\Omega} (2u^n-u^{n-1})\ \text{d}A - \omega|\Omega|\right)\left\langle 1,u^{n+1}-u^{n}\right\rangle}_{\text{III}},
\end{align}
By Taylor Expansion, 
\begin{align*}
    & W'(u^{n})(u^{n+1}-u^{n})=W(u^{n+1})-W(u^{n})-\frac{W''(\xi^n)}{2}(u^{n+1}-u^{n})^2, \\
   &  W'(u^{n})-W'(u^{n-1})=W''(\mu^n)(u^{n}-u^{n-1}),
\end{align*}
where $\xi^n$ is between $u^{n+1}$ and $u^n$, $\mu^n$ is between $u^{n-1}$ and $u^n$. Then, since $L_{W''} = \|W''\|_{L^{\infty}}$, the term I can be estimated by Young’s inequality as follows:
\begin{align}
\text{I} = & \ -\frac{1}{\epsilon}\left\langle W(u^{n+1}),1\right\rangle + \frac{1}{\epsilon}\left\langle W(u^{n}),1\right\rangle +\frac{W''(\xi^n)}{2\epsilon}\|u^{n+1}-u^n\|_{L^2}^2-\frac{W''(\mu^n)}{\epsilon}\left\langle u^n-u^{n-1},u^{n+1}-u^n\right\rangle \nonumber\\
\leq & \ -\frac{1}{\epsilon}\left\langle W(u^{n+1}),1\right\rangle + \frac{1}{\epsilon}\left\langle W(u^{n}),1\right\rangle  + \frac{L_{W''}}{2\epsilon}\left(\|u^{n}-u^{n-1}\|_{L^2,h}^2+2\|u^{n+1}-u^n\|_{L^2,h}^2\right)\nonumber.
\end{align}
Also, by Young’s inequality, term II and III can be estimated as 
\begin{align}
\text{II} \leq & \ -\frac{\gamma_{}}{2}\left(\left\langle(-\Delta_{\mathbb{S}^2})^{-1}\left(u^{n+1}-\omega\right),u^{n+1}-\omega\right\rangle -\left\langle(-\Delta_{\mathbb{S}^2})^{-1}\left(u^{n}-\omega\right),u^{n}-\omega\right\rangle \right)\nonumber\\
 &\ + \frac{\gamma_{}}{2} \|(-\Delta_{\mathbb{S}^2})^{-1}\|_{L^2} \left(\|u^{n}-u^{n-1}\|_{L^2}^2+2\|u^{n+1}-u^n\|_{L^2}^2\right), \nonumber\\
\text{III} \leq & \ - \frac{M}{2} \left(\left\langle u^{n+1} - \omega,1\right\rangle^2 - \left\langle u^n -\omega ,1\right\rangle^2\right) \nonumber  \\
        &\ +  \frac{M}{2}|\Omega|\left(\|u^{n}-u^{n-1}\|_{L^2}^2+2\|u^{n+1}-u^n\|_{L^2}^2\right)\nonumber.
\end{align}
Note that the penalized OK energy functional (\ref{eqn:pOK}) can be written as 
\begin{align}
E^{\mathrm{pSOK}}(u^{n}) = & \  \frac{\epsilon}{2}\left\langle-\Delta_{\mathbb{S}^2} u^{n},u^{n}\right\rangle+\frac{1}{\epsilon}\left\langle W(u^{n}),1 \right\rangle \nonumber\\
& \ + \frac{\gamma}{2}\left\langle(-\Delta_{\mathbb{S}^2})^{-1}\left(u^{n}-\omega\right),\left(u^{n}-\omega\right)\right\rangle + \frac{M}{2} \left\langle u^{n}-\omega, 1 \right\rangle^2,
\end{align}
Now, reordering terms from the two sides and dropping the unnecessary terms, we can get the following inequality
\begin{align*}
&\left[E^{\mathrm{pSOK}}(u^{n+1})+\left(\frac{\kappa}{2\epsilon}+\frac{1}{4\tau}\right)\|u^{n+1}-u^n\|_{L^2}^2+\frac{\gamma_{} \beta}{2}\left\langle(-\Delta_{\mathbb{S}^2})^{-1}(u^{n+1}-u^{n}),u^{n+1}-u^n\right\rangle\right] \\
& -\left[E^{\mathrm{pSOK}}(u^{n})+\left(\frac{\kappa}{2\epsilon}+\frac{1}{4\tau}\right)\|u^{n}-u^{n-1}\|_{L^2}^2+\frac{\gamma_{} \beta}{2}\left\langle(-\Delta_{\mathbb{S}^2})^{-1}(u^{n}-u^{n-1}),u^{n}-u^{n-1}\right\rangle\right] \\
 \leq &\ \left(\frac{L_{W''}}{2\epsilon}+\frac{\gamma_{}}{2} \|(-\Delta_{\mathbb{S}^2})^{-1}\|_{L^2}+\frac{M}{2}|\Omega|\right)\left(\|u^{n}-u^{n-1}\|_{L^2}^2+2\|u^{n+1}-u^n\|_{L^2}^2\right) -\frac{1}{\tau}\|u^{n+1}-u_{}^{n}\|_{L^2}^2 \\
 = & \  \left(C\left(\|u^{n}-u^{n-1}\|_{L^2}^2+2\|u^{n+1}-u^n\|_{L^2}^2\right) - \frac{1}{\tau}\|u^{n+1}-u_{}^{n}\|_{L^2}^2\right),
\end{align*}
where $C$ are given by
\begin{align}\label{cst:pNO}
C = \frac{L_{W''}}{2\epsilon}+\frac{\gamma}{2} \|(-\Delta_{\mathbb{S}^2})^{-1}\|_{L^2}+\frac{M}{2}|\Omega|,
\end{align}
where $L_{W''}$ and $\|(-\Delta_{\mathbb{S}^2})^{-1}\|_{L^2}$ are constants which can be found in Section (\ref{subsec:notation}). 

Next, adding $ C\left(\|u^{n+1}-u^n\|_{L^2}^2-\|u^{n}-u^{n-1}\|_{L^2}^2\right)$ to both sides of the above inequality, we further have
\begin{align*}
 \tilde{E}^{\mathrm{pSOK}}[u^{n+1},u^{n}] - \tilde{E}^{\mathrm{pSOK}}[u^{n},u^{n-1}] \leq  \left(3C-\frac{1}{\tau}\right)\|u^{n+1}-u^n\|_{L^2}^2,
\end{align*}
where the modified OK energy functional $\tilde{E}^{\mathrm{pOK}}[u^{n},u^{n-1}]$ is defined in (\ref{eqn:OK_modifiedenergy}). Therefore, if $3C - \frac{1}{\tau} \leq 0$, which means $\tau\leq\frac{1}{3C}$, then 
\begin{align*}
    \tilde{E}^{\mathrm{pOK}}[u^{n+1},u^{n}] \leq \tilde{E}^{\mathrm{pOK}}[u^{n},u^{n-1}],
\end{align*}
and the energy stability for the pACSOK system is proved.

\end{proof}

\subsection{Fully-discrete Scheme for pACSOK and pACSNO Dynamics}\label{subsec:fully}
In this subsection, we apply the DFS method for spatial discretization on the unit sphere to construct the fully-discrete schemes for both pACSOK and pACSNO systems.

\subsubsection{Double Fourier Spectral (DFS) Method for Spatial Discretization}\label{subsection:spatial discretization}
For the pACSOK and pACSNO systems on the domain $\Omega = [-\pi,\pi]\times [0,\pi]$, we extend $\Omega$ into $\Tilde{\Omega}= [-\pi,\pi]\times [-\pi,\pi]$ and transform $f$ into BMC-I function $\tilde{f}$. For brevity, we will still use $f$ to denote its BMC-I extension. The double Fourier expansion of $f$ in the spatial domain is expressed as follows: 
\begin{align}\label{eqn:doub_Four_Exp}
  f(\phi,\theta) = \sum_{j,k}\hat{f}_{jk}e^{\text{i}j\theta}e^{\text{i}k\phi},
\end{align}

Then, we use the spectral collocation approximation to discretize the spatial operators on the unit sphere. Choosing two positive even integers $N_{\phi}$ and $N_{\theta}$, and defining the uniform mesh in both $\phi$ and $\theta$ directions with mesh size $h_{\phi} = \frac{2\pi}{N_{\phi}}$ and $h_{\theta} = \frac{2\pi}{N_{\theta}}$, respectively. The set of collocation points is formulated as $\Tilde{\Omega}_{h} = \{(\phi_i,\theta_j); \ i = 1:N_{\phi},j = 1:N_{\theta}\}$ and the index set is given by 
\begin{align}
    & S_{h} = \{(i,j)\in\mathbb{Z}^2;\ i = 1:N_{\phi},j = 1:N_{\theta}\}, \nonumber \\
    & \hat{S}_h = \{(k,l)\in\mathbb{Z}^2;\ k = -\frac{N_{\phi}}{2}:\frac{N_{\phi}}{2}-1,l = -\frac{N_{\theta}}{2}:\frac{N_{\theta}}{2}-1\}. \nonumber
\end{align}
Denote $\mathcal{M}_{h}$ as the collection of grid functions defined on $\Tilde{\Omega}_{h}$:
\begin{align*}
    \mathcal{M}_{h} = \{f:\Tilde{\Omega}_{h}\to \mathbb{R} | \ f_{i+mN_{\phi},j+nN_{\theta}} = f_{i,j}, \forall(i,j)\in S_{h}, \forall (m,n)\in\mathbb{Z}^2\}.
\end{align*}
For any $f,g\in \mathcal{M}_{h}$ and $\mathbf{f} = (f^1,f^2)^\mathrm{T}, \mathbf{g} = (g^1,g^2)^\mathrm{T}\in \mathcal{M}_{h}\times\mathcal{M}_{h}$, we define the discrete $L^2$ inner product $\langle\cdot,\cdot\rangle_{h}$, discrete $L^2$-norm $||\cdot||_{L^2,h}$, and discrete $L^{\infty}$-norm $||\cdot||_{L^{\infty},h}$ as follows:
\begin{align*}
   & \langle f,g \rangle_{h} =h_{\phi} h_{\theta}\sum_{(i,j)\in S_h}f_{ij}g_{ij}, \ \ ||f||_{L^2,h} = \sqrt{\langle f,f \rangle_{h}}, \ \ ||f||_{L^{\infty},h} = \max_{(i,j)\in S_h}|f_{ij}|, \\
   &  \langle \mathbf{f},\mathbf{g} \rangle_{h} = h_{\phi} h_{\theta}\sum_{(i,j)\in S_h}\left(f^1_{ij}g^1_{ij}+f^2_{ij}g^2_{ij}\right), \ \ ||\mathbf{f}||_{L^2,h} = \sqrt{\langle \mathbf{f},\mathbf{f} \rangle_{h}}.
\end{align*}
The discrete double Fourier transform of a function $f\in \mathcal{M}_h$ is defined as
\begin{align*}
    \hat{f}_{kj} = \frac{1}{N_{\phi}N_{\theta}} \sum_{(i,j)\in S_h}f_{ij}e^{-\text{i}k\phi_i}e^{-\text{i}l\theta_j}, \ \ (k,l)\in\hat{S}_h,
\end{align*}
and its inverse transform is given by
\begin{align*}
    f_{ij} = \sum_{(k,l)\in \hat{S}_h} \hat{f}_{kl}e^{\text{i}k\phi_i}e^{\text{i}l\theta_j}, \ \ (i,j)\in S_h.
\end{align*}
Moreover, the spectral approximations to the differential operators $\partial_{\theta\theta}$, $\partial_{\phi}$, $\partial_{\phi\phi}$ follow:
\begin{align*}
   & \{f_{\theta\theta}\}_{ij} = \{\partial_{\theta\theta} f\}_{ij} = \sum_{(k,l)\in \hat{S}_h} -l^2\hat{f}_{kl}e^{\text{i}k\phi_i}e^{\text{i}l\theta_j}, \ \ (i,j)\in S_h, \\
   & \{f_{\phi}\}_{ij} = \{\partial_{\phi} f\}_{ij} = \sum_{(k,l)\in \hat{S}_h} \text{i}k\hat{f}_{kl}e^{\text{i}k\phi_i}e^{\text{i}l\theta_j}, \ \ (i,j)\in S_h, \\
   & \{f_{\phi\phi}\}_{ij} = \{\partial_{\phi\phi} f\}_{ij} = \sum_{(k,l)\in \hat{S}_h} -k^2\hat{f}_{kl}e^{\text{i}k\phi_i}e^{\text{i}l\theta_j}, \ \ (i,j)\in S_h. 
\end{align*}

\subsubsection{Fully-discrete Schemes}\label{subsec:FullyScheme}
In this subsection, we present the fully-discrete schemes for the pACSOK and pACSNO systems by using the BDF2 scheme in time (\ref{subsec:semi}) and the DFS method for spatial discretization.

Denoting the numerical solutions for the pACSOK equation by $U^n \approx u(\phi,\theta;t_n)|_{\Tilde{\Omega}_{h}}$ with time step size $\tau$ and $t_n = \tau_1+(n-1)\tau$ being the time of $n$-th step, the second-order fully-discrete scheme follows: given initial condition $U^{0} = u_0(\phi,\theta)|_{\Tilde{\Omega}_{h}}$, the first-step solution $U^{1}$ is computed by (\ref{eqn:OK_1st_order_scheme}) with DFS discretization in space and with time step size $\tau_1 = \min\{\tau^2,1\}$: 
\begin{align}\label{eqn:BDF_OK_fully_1st}
    \frac{U^{1}-U^{0}}{\tau_1}
    = \ & \epsilon\Delta_{\mathbb{S}^2,h} U^{1} - \frac{1}{\epsilon}W'(U^{0}) -\frac{\kappa_h^{*}}{\epsilon}\left(U^{1}-U^{0}\right) -\gamma\beta_h^{*}(-\Delta_{\mathbb{S}^2,h})^{-1}\left(U^{1}-U^{0}\right) \nonumber\\
    & -\gamma(-\Delta_{\mathbb{S}^2,h})^{-1}\left(U^0-\omega\right)  -M\left[\langle U^0,1\rangle_h - \omega|\Tilde{\Omega}_h|\right],
\end{align}
where $\kappa_h^*, \beta_h^*\geq 0$ are stabilization constants for the BDF1 scheme, and the choice of $\tau_1$ leads to the second-order accuracy of the fully-discrete scheme. Next, find $U^{n+1} = (U^{n+1})_{ij} \in \mathcal{M}_h$ such that   
\begin{align}\label{eqn:BDF_OK_fully}
    \frac{3U^{n+1}-4U^{n}+U^{n-1}}{2\tau}
    = \ & \epsilon\Delta_{\mathbb{S}^2,h} U^{n+1} - \frac{1}{\epsilon}\left[2W'(U^n)-W'(U^{n-1})\right] \nonumber\\
    &  -\frac{\kappa_h}{\epsilon}\left(U^{n+1}-2U^{n}+U^{n-1}\right) -\gamma\beta_h(-\Delta_{\mathbb{S}^2,h})^{-1}\left(U^{n+1}-2U^{n}+U^{n-1}\right) \nonumber\\
    & -\gamma(-\Delta_{\mathbb{S}^2,h})^{-1}\left(2U^n-U^{n-1}-\omega\right)  -M\left[\langle2U^n-U^{n-1},1\rangle_h - \omega|\Tilde{\Omega}_h|\right],
\end{align}
where $\kappa_h,\beta_h\geq0$ are stabilization constants, the stabilizer $\frac{\kappa_h}{\epsilon}\left(U^{n+1}-2U^{n}+U^{n-1}\right)$ controls the growth of $W'$, and the stabilizer $\gamma \beta_h(-\Delta_{\mathbb{S}^2,h})^{-1}\left(U^{n+1}-2U^{n}+U^{n-1}\right)$ dominates the behavior of $(-\Delta_{\mathbb{S}^2,h})^{-1}$. 

Similarly, the second-order fully-discrete scheme for the pACSNO system reads: given initial condition $U_i^{0} = u_{0,i}(\phi,\theta)|_{\Tilde{\Omega}_{h}}$, $i = 1,2$, and the computed first-step solution $U_i^1$ as
\begin{align}\label{eqn:BDF_NO_fully_1st}
        \frac{U^{1}_i-U^{0}_i}{\tau_1}  = &\ \epsilon\Delta_{\mathbb{S}^2,h} U^{1}_i + \frac{\epsilon}{2}\Delta_{\mathbb{S}^2,h} U^{0+\text{mod}(i+1,2)}_{j} - \frac{1}{\epsilon}\frac{\partial W_2}{\partial u_{i}}(U^{0+\text{mod}(i+1,2)}_1,U^{0}_{2}) \nonumber \\
    &  -\frac{\kappa_{i,h}^{*}}{\epsilon}\left(U^{1}_i-U^{0}_i\right)  - \gamma_{ii}\beta_{i,h}^{*}(-\Delta_{\mathbb{S}^2,h})^{-1}\left(U^{1}_i-U^{0}_i\right)  \nonumber\\
    & -\gamma_{ii}(-\Delta_{\mathbb{S}^2,h})^{-1}\left[U^0_i-\omega_i\right]  -\gamma_{ij}(-\Delta_{\mathbb{S}^2,h})^{-1}\left[U^{0+\text{mod}(i+1,2)}_j-\omega_j\right]\nonumber\\
    & -M_i\left[\langle U_i^0,1\rangle_h - \omega_i|\Tilde{\Omega}_{h}|\right].
\end{align}
with the stabilization constants $\kappa^*_{i,h}, \beta^*_{i,h} \geq0$, $i = 1,2$. The numerical solutions $(U_1^{n+1},U_{2}^{n+1}) = \left((U_1^{n+1})_{ij},(U_2^{n+1})_{ij}\right) \in \mathcal{M}_h\times\mathcal{M}_h$ for each $n\in \mathbb{N^+}$ are given by 
\begin{align}\label{eqn:BDF_NO_fully}
        &\frac{3U^{n+1}_i-4U^{n}_i+U^{n-1}_i}{2\tau} \nonumber\\ 
    = & \epsilon\Delta_{\mathbb{S}^2,h} U^{n+1}_i + \frac{\epsilon}{2}\left(2\Delta_{\mathbb{S}^2,h} U^{n+\text{mod}(i+1,2)}_{j}-\Delta_{\mathbb{S}^2,h} U^{n-1+2\cdot\text{mod}(i+1,2)}_{j}\right) \nonumber \\
    & - \frac{1}{\epsilon}\left[2\frac{\partial W_2}{\partial u_{i}}(U^{n+\text{mod}(i+1,2)}_1,U^{n}_{2})-\frac{\partial W_2}{\partial u_{i}}(U^{n-1+2\cdot\text{mod}(i+1,2)}_1,U^{n-1}_{2})\right] \nonumber \\
    &  -\frac{\kappa_{i,h}}{\epsilon}\left(U^{n+1}_i-2U^{n}_i+U^{n-1}_i\right)  - \gamma_{ii}\beta_{i,h}(-\Delta_{\mathbb{S}^2,h})^{-1}\left(U^{n+1}_i-2U^{n}_i+U^{n-1}_i\right)  \nonumber\\
    & -\gamma_{ii}(-\Delta_{\mathbb{S}^2,h})^{-1}\left[2U^n_i-U^{n-1}_i-\omega_i\right]  -\gamma_{ij}(-\Delta_{\mathbb{S}^2,h})^{-1}\left[2U^{n+\text{mod}(i+1,2)}_j-U^{n-1+2\cdot\text{mod}(i+1,2)}_j-\omega_j\right]\nonumber\\
    & -M_i\left[\langle2U_i^n-U_i^{n-1},1\rangle_h - \omega_i|\Tilde{\Omega}_{h}|\right],
\end{align}
for $i = 1,2$ and $j\neq i$, and stabilization constants $\kappa_{1,h},\kappa_{2,h}, \beta_{1,h},\beta_{2,h}\geq0$.

Next, we focus on proving the energy stability of the fully-discrete schemes (\ref{eqn:BDF_OK_fully}, \ref{eqn:BDF_NO_fully}). Similar to the semi-discrete case in Section (\ref{subsec:semi}), the $L^2$-bound for the discrete inverse Laplacian operator $(-\Delta_{\mathbb{S}^2,h})^{-1}$ is required to guarantee the energy stability.  

\subsubsection{Estimation of $\|(-\Delta_{\mathbb{S}^2,h})^{-1}\|_{L^2,h}$}
In this subsection, we prove the $L^2$ boundedness of the discrete inverse spherical Laplacian $(-\Delta_{\mathbb{S}^2,h})^{-1}$, denoted by $\|(-\Delta_{\mathbb{S}^2,h})^{-1}\|_{L^2,h}$ in Lemma (\ref{lemma:discrete_L^2}), which satisfies $\|(-\Delta_{\mathbb{S}^2,h})^{-1}f\|_{L^2,h}\leq C\|f\|_{L^2,h}$, where $f\in\mathcal{M}_h$ and the constant $C$ is a generic constant independent of the mesh size. 

\begin{lemma}\label{lemma:discrete_L^2}
The $L^2$-bound of the discrete inverse spherical Laplacian under double Fourier expansion is given by
\begin{align*}
\|(-\Delta_{\mathbb{S}^{2},h})^{-1}\|_{L^2,h}  \leq C,
\end{align*}
with a generic constant $C$ independent of the mesh size $h_{\phi}$ and $h_{\theta}$.
\end{lemma}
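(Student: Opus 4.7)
The plan is to transfer the continuous elliptic bound $\|(-\Delta_{\mathbb{S}^2})^{-1}\|_{L^2}\le C$ recalled in Section~\ref{subsec:notation} to the discrete setting by exploiting an isometry between the grid space $(\mathcal{M}_h,\|\cdot\|_{L^2,h})$ and the trigonometric polynomial space $V_N$ spanned by $\{e^{\mathrm{i}k\phi}e^{\mathrm{i}l\theta}:(k,l)\in\hat{S}_h\}$ equipped with $\|\cdot\|_{L^2(\Tilde{\Omega})}$. First, for $f\in\mathcal{M}_h$ with vanishing discrete spherical mean I would form the trigonometric interpolant $f_N=\sum_{(k,l)\in\hat{S}_h}\hat{f}_{kl}e^{\mathrm{i}k\phi}e^{\mathrm{i}l\theta}\in V_N$, and similarly $u_N$ for $u=(-\Delta_{\mathbb{S}^2,h})^{-1}f$. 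Discrete Parseval on the uniform DFS grid together with continuous Parseval on $V_N$ yield $\|f\|_{L^2,h}=\|f_N\|_{L^2(\Tilde{\Omega})}$ and $\|u\|_{L^2,h}=\|u_N\|_{L^2(\Tilde{\Omega})}$ up to a fixed geometric factor, so it suffices to establish $\|u_N\|_{L^2(\Tilde{\Omega})}\le C\|f_N\|_{L^2(\Tilde{\Omega})}$ uniformly in $N_\phi,N_\theta$.

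Next, I would reinterpret the Fourier-space systems~(\ref{eqn:linear_sys})--(\ref{eqn:lin_sys0}) with $\alpha=0$ as the pseudo-spectral discretization of the weighted Poisson problem $-\sin^2(\theta)\,\tilde{u}_{\theta\theta}-\sin(\theta)\cos(\theta)\,\tilde{u}_\theta-\tilde{u}_{\phi\phi}=\sin^2(\theta)\,\tilde{f}$ posed on $V_N\cap\mathring{H}^1_p(\Tilde{\Omega})$. Because $\sin(\theta)$ and $\cos(\theta)$ are themselves degree-one trigonometric polynomials, the multiplication matrices $\mathcal{M}[\sin(\theta)]$ and $\mathcal{M}[\cos(\theta)]$ act as sparse near-shifts on the Fourier modes while $\mathcal{D}$ preserves $V_N$ exactly; the zero-mean row at $l=0$ then singles out a unique solution consistent with (\ref{eqn:zero_mean}). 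Writing the resulting matrix as $A_h = A_h^{(0)}+R_h$, where $A_h^{(0)}$ represents the exact Galerkin projection of $-\Delta_{\mathbb{S}^2}$ onto $V_N\cap\mathring{H}^1_p$ and $R_h$ is a finite-rank correction supported only at the two highest retained frequencies $\pm N_\theta/2$, the continuous bound and the BMC-I relation $\|\cdot\|_{L^2(\Tilde{\Omega})}^2 = 2\|\cdot\|_{L^2(\Omega)}^2$ (which is bounded against the spherical $L^2(\mathbb{S}^2)$ norm since $|\sin(\theta)|\le 1$) would supply a uniform bound $\|(A_h^{(0)})^{-1}\|\le C$ inherited from the spherical-harmonic spectral gap $\inf_{l\ge 1}l(l+1)=2$.

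The principal obstacle is controlling the aliasing perturbation $R_h$ uniformly in the mesh size, since the shift action of $\mathcal{M}[\sin(\theta)]$ and $\mathcal{M}[\cos(\theta)]$ pushes boundary modes outside $\hat{S}_h$ and truncation reintroduces them at the opposite end. My plan here is to use a Schur-complement or Neumann-series argument: since $R_h$ has rank $O(N_\phi)$ but its nonzero entries lie in the high-frequency band where $A_h^{(0)}$ is strongly coercive (eigenvalues growing like $l^2$), the perturbation $(A_h^{(0)})^{-1}R_h$ has operator norm $O(1/N_\theta)$, and hence $\|A_h^{-1}\|\le\|(A_h^{(0)})^{-1}\|\cdot\|(I+(A_h^{(0)})^{-1}R_h)^{-1}\|\le C$ independently of the mesh. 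A minor technical point is verifying that the discrete zero-mean vector $\mathbf v$ in~(\ref{eqn:lin_sys0}) coincides, up to normalization, with the continuous constraint via the exactness of trigonometric quadrature on $V_N\cdot V_N$; this closes the loop and delivers the claimed bound.
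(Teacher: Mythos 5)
Your route is genuinely different from the paper's, but it has a gap at its central step: the transfer of the spherical-harmonic spectral gap to the norm the lemma actually uses. The bound $\inf_{l\ge 1} l(l+1)=2$ gives coercivity of $-\Delta_{\mathbb{S}^2}$ in $L^2(\mathbb{S}^2)$, i.e.\ with respect to the surface measure $\sin(\theta)\,\mathrm{d}\theta\,\mathrm{d}\phi$, whereas $\|\cdot\|_{L^2,h}$ is the flat, unweighted discrete norm on the doubled rectangle $\Tilde{\Omega}$. These two norms are not uniformly equivalent: $|\sin(\theta)|\le 1$ only gives $\|u\|_{L^2(\mathbb{S}^2)}\le \|u\|_{L^2(\Tilde{\Omega})}$, which is the wrong direction. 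To conclude $\|u_N\|_{L^2(\Tilde{\Omega})}\le C\|f_N\|_{L^2(\Tilde{\Omega})}$ from a resolvent bound in the spherical norm you would need the reverse inequality for $u_N$, and that fails for functions concentrated near the poles where $\sin(\theta)$ degenerates. The same degeneracy undermines your Neumann-series step: the "strong coercivity at high frequencies" you invoke to get $\|(A_h^{(0)})^{-1}R_h\|=O(1/N_\theta)$ is a property of the spherical Laplacian in the weighted inner product, not of the weighted operator $-\sin^2(\theta)\partial_{\theta\theta}-\sin(\theta)\cos(\theta)\partial_\theta-\partial_{\phi\phi}$ in the flat one, whose principal symbol vanishes at $\theta=0,\pm\pi$. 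So both the unperturbed bound and the perturbation bound are asserted in the wrong topology.

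The paper avoids this entirely by never leaving the flat norm. It separates variables only in $\phi$, so that for each azimuthal mode $k\ne 0$ the operator $\mathcal{L}_k=-\sin^2(\theta)\partial_{\theta\theta}-\sin(\theta)\cos(\theta)\partial_\theta+k^2$ contains the \emph{unweighted} zeroth-order term $k^2 u_k$; a discrete integration by parts then gives
\begin{align*}
\langle \mathcal{L}_k u_k,u_k\rangle_h=\Big\langle \sin^2(\theta)\Big(\tfrac{\partial u_k}{\partial\theta}\Big)^2+\big(k^2-\tfrac12\cos(2\theta)\big)u_k^2,\,1\Big\rangle_h\ \ge\ \big(k^2-\tfrac12\big)\|u_k\|_{L^2,h}^2,
\end{align*}
and Cauchy--Schwarz on $\langle\sin^2(\theta)f_k,u_k\rangle_h$ yields $\|u_k\|_{L^2,h}\le \alpha_k^{-1}\|f_k\|_{L^2,h}$ with $\alpha_k\ge \tfrac12$ uniformly in the mesh; summing over $k$ finishes the proof. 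The degenerate weight only multiplies the \emph{positive} gradient term, so it never needs to be inverted. If you want to salvage your Galerkin-plus-perturbation strategy, you would have to carry it out for the weighted operator in the flat inner product (where the relevant coercivity comes from the $k^2$ term, exactly as above), at which point it collapses into the paper's argument; your Parseval identification of $\|\cdot\|_{L^2,h}$ with $\|\cdot\|_{L^2(\Tilde{\Omega})}$ on $V_N$ is fine and is implicitly what the paper uses.
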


\begin{proof}
Given any functions $u,f \in \mathcal{M}_h$, such that the discrete spherical Laplacian $-\Delta_{\mathbb{S}^{2},h}$ satisfies
\begin{align}
-\Delta_{\mathbb{S}^{2},h} u = f \Longleftrightarrow -\sin^2{(\theta)} \frac{\partial^2 u}{\partial \theta^2} - \sin{(\theta)}\cos{(\theta)}\frac{\partial u}{\partial \theta} - \frac{\partial^2u}{\partial \phi^2} = \sin^2{(\theta)} f,
\end{align}
where $u$ and $f$ are
\begin{align}
u(\theta,\phi) = \sum\limits_{k = -\frac{N_{\phi}}{2}}^{\frac{N_{\phi}}{2}-1} u_{k}(\theta) e^{\text{i}k\phi},\ \ \  f(\theta,\phi) = \sum\limits_{k = -\frac{N_{\phi}}{2}}^{\frac{N_{\phi}}{2}-1} f_{k}(\theta) e^{\text{i}k\phi}, \nonumber
\end{align}
with 
\begin{align}
u_{k}(\theta) = \sum\limits_{j = -\frac{N_{\theta}}{2}}^{\frac{N_{\theta}}{2}-1} \hat{u}_{jk} e^{\text{i}j\theta}, \ \ \ f_{k}(\theta) = \sum\limits_{j = -\frac{N_{\theta}}{2}}^{\frac{N_{\theta}}{2}-1} \hat{f}_{jk} e^{\text{i}j\theta}, \ \ \ \ k = -\frac{-N_{\phi}}{2}: \frac{N_{\phi}}{2}-1, \nonumber
\end{align}
the Poisson equation becomes
\begin{align}
-\Delta_{\mathbb{S}^{2},h} u = \sum\limits_{k = -\frac{N_{\phi}}{2}}^{\frac{N_{\phi}}{2}-1} \left[-\sin^2{(\theta)}\frac{\partial^2 u_k}{\partial \theta^2} - \sin{(\theta)}\cos{(\theta)}\frac{\partial u_k}{\partial \theta} + k^2u_k\right] e^{\text{i}k\phi} = \sum\limits_{k = -\frac{N_{\phi}}{2}}^{\frac{N_{\phi}}{2}-1} \left[\sin^2{(\theta)}f_k\right] e^{\text{i}k\phi} = \sin^2{(\theta)} f. \nonumber
\end{align}
Meanwhile, to specify a unique solution, the function $u$ is required to satisfy $\int_0^{\pi}\int_{-\pi}^{\pi} u(\theta,\phi) \sin(\theta)\text{d}\phi\text{d}\theta = 0$, which implies $u_0$ does not influence the norm of $u$.  
Therefore, for each $k \in \mathcal{I}_{N_{\phi}}:=\{ -\frac{N_{\phi}}{2},\cdots,-1, 1, \cdots, \frac{N_{\phi}}{2}-1\}$, we have 
\begin{align}
-\sin^2{(\theta)}\frac{\partial^2 u_k}{\partial \theta^2} - \sin{(\theta)}\cos{(\theta)}\frac{\partial u_k}{\partial \theta} + k^2u_k = \sin^2{(\theta)}f_k. \nonumber
\end{align}
Define a bilinear form as follows:
\begin{align}
a_h(u_k,u_k) = \left\langle \mathcal{L}u_k, u_k \right\rangle_h = \left\langle \sin^2{(\theta)}f_k, u_k \right\rangle_h. \nonumber
\end{align}
By applying integration by parts under periodic boundary conditions, we have
\begin{align}
\left\langle \sin{(\theta)}\cos{(\theta)}\frac{\partial u_k}{\partial \theta}, u_k \right\rangle_h & = \left\langle \sin{(\theta)}\cos{(\theta)}u_k, \frac{\partial u_k}{\partial \theta} \right\rangle_h  \nonumber \\
& = \left\langle -\frac{\partial }{\partial \theta}\left(\sin{(\theta)}\cos{(\theta)}u_k\right), u_k \right\rangle_h  \nonumber \\
& = \left\langle -\cos{(2\theta)}u_k -\sin{(\theta)}\cos{(\theta)}\frac{\partial u_k}{\partial \theta}, u_k \right\rangle_h,  \nonumber
\end{align}
which leads to
\begin{align}
\left\langle \sin{(\theta)}\cos{(\theta)}\frac{\partial u_k}{\partial \theta}, u_k \right\rangle_h = \left\langle -\frac{1}{2}\cos{(2\theta)}u_k , u_k \right\rangle_h.  \nonumber
\end{align}
Therefore, the bilinear form $\left\langle \mathcal{L}u_k, u_k \right\rangle_h$ can be written as
\begin{align}
\left\langle \mathcal{L}u_k, u_k \right\rangle_h & =  \left\langle -\sin^2{(\theta)}\frac{\partial^2 u_k}{\partial \theta^2} - \sin{(\theta)}\cos{(\theta)}\frac{\partial u_k}{\partial \theta} + k^2u_k, u_k \right\rangle_h  \nonumber \\
& =  \left\langle -\frac{\partial }{\partial\theta}\left(\sin^2{(\theta)}\frac{\partial u_k}{\partial \theta}\right) + 2\sin{(\theta)}\cos{(\theta)}\frac{\partial u_k}{\partial \theta}- \sin{(\theta)}\cos{(\theta)}\frac{\partial u_k}{\partial \theta}+ k^2u_k, u_k \right\rangle_h  \nonumber\\
& =  \left\langle -\frac{\partial }{\partial\theta}\left(\sin^2{(\theta)}\frac{\partial u_k}{\partial \theta}\right) + \sin{(\theta)}\cos{(\theta)}\frac{\partial u_k}{\partial \theta}+ k^2u_k, u_k \right\rangle_h  \nonumber\\
& = \left\langle \sin^2{(\theta)}\left(\frac{\partial u_k}{\partial \theta}\right)^2 + \left(- \frac{1}{2}\cos{(2\theta)}+ k^2\right)u_k^2, 1 \right\rangle_h \nonumber  \\
& \geq \alpha_k \|u_k\|_{L^2,h}^{2}, \ \ \alpha_k = - \frac{1}{2}\cos{(2\theta)}+ k^2 > 0, \ \ k \in\mathcal{I}_{N_{\phi}}. \nonumber
\end{align}
Therefore, by the Cauchy–Schwarz inequality, the above inequality becomes
\begin{align}
\alpha_k \|u_k\|_{L^2,h}^{2} \leq \left\langle \mathcal{L}u_k, u_k \right\rangle_h = \left\langle \sin^2{(\theta)}f_k, u_k \right\rangle_h \leq \|f_k\|_{L^2,h}\|u_k\|_{L^2,h}, \nonumber
\end{align}
which leads to
\begin{align}
\|u_k\|_{L^2,h} \leq \frac{1}{\alpha_k} \|f_k\|_{L^2,h},  \ \ \alpha_k = - \frac{1}{2}\cos{(2\theta)}+ k^2 > 0, \ \ k \in\mathcal{I}_{N_{\phi}}. \nonumber
\end{align}
Consequently, we can derive the $L^2$-bounds of the discrete inverse spherical Laplacian operator $(-\Delta_{\mathbb{S}^{2},h})^{-1}$ as 
\begin{align}
\left\| u \right\|_{L^2,h}^2 = \sum\limits_{k \in\mathcal{I}_{N_{\phi}}}  \|u_{k}\|_{L^2}^2 \leq \sum\limits_{k \in\mathcal{I}_{N_{\phi}}} \frac{1}{\alpha_k^2} \|f_k\|_{L^2}^2 \leq \max\left\{\frac{1}{\alpha_k^2}\right\}_{k \in\mathcal{I}_{N_{\phi}}} \sum\limits_{k \in\mathcal{I}_{N_{\phi}}}  \|f_{k}\|_{L^2}^2 = \max\left\{\frac{1}{\alpha_k^2}\right\}_{k \in\mathcal{I}_{N_{\phi}}} \left\| f \right\|_{L^2}^2, \nonumber 
\end{align}
that results in 
\begin{align}
\left\| u \right\|_{L^2,h} = \|(-\Delta_{\mathbb{S}^{2},h})^{-1} f\|_{L^2} \leq C\left\| f \right\|_{L^2,h},
\end{align}
where $C = \max\left\{\frac{1}{\alpha_k}, \ k=1,2,\cdots, \frac{N_{\phi}}{2}\right\}$ is a generic constant independent of the mesh size $h_{\phi}$ and $h_{\theta}$.
\end{proof}

%


\subsubsection{Energy stability for the fully-discrete schemes}\label{subsec:FullyScheme}
In this subsection, we aim to prove the energy stability for the fully-discrete schemes for the pACSOK (\ref{eqn:BDF_OK_fully}) and pACSNO (\ref{eqn:BDF_NO_fully}) systems. By defining the discrete energy functional for the SOK model 
\begin{align}\label{eqn:pOK_discrete}
E^{\mathrm{pSOK}}_{h}(U^{n}) =  \frac{\epsilon}{2}\|\nabla_{\mathbb{S}^2,h}U^{n}\|_{L^2,h}+\frac{1}{\epsilon}\left\langle W(U^{n}),1 \right\rangle_h + \frac{\gamma}{2}\|(-\Delta_{\mathbb{S}^{2},h})^{-\frac{1}{2}}\left(U^{n}-\omega\right)\|_{L^2,h} + \frac{M}{2} \left(\left\langle U^{n}, 1 \right\rangle_h - \omega|\Omega|\right),
\end{align}
and the discrete energy functional for the SNO model
 \begin{align}\label{eqn:pNO_discrete}
E^{\mathrm{pSOK}}_{h}(U_1^{n},U_2^{n}) =  & \frac{\epsilon}{2}\left(\|\nabla_{\mathbb{S}^{2},h}U_1^{n}\|_{L^2,h}+\|\nabla_{\mathbb{S}^{2},h}U_2^{n}\|_{L^2,h}+\left\langle \nabla_{\mathbb{S}^{2},h}U_1^{n},\nabla_{\mathbb{S}^{2},h}U_2^{n}\right\rangle\right)+\frac{1}{\epsilon}\left\langle W_2(U_1^{n},U_2^{n}),1 \right\rangle_h \nonumber\\
& + \sum\limits_{i,j=1}^{2}\frac{\gamma_{ij}}{2}\left\langle(-\Delta_{\mathbb{S}^2,h})^{-1}\left(U_i^{n}-\omega\right),\left(U_j^{n}-\omega\right)\right\rangle_{h} + \sum\limits_{i=1}^{2}\frac{M_i}{2} \left(\left\langle U_i^{n}, 1 \right\rangle_h - \omega_i|\Omega|\right),
\end{align}
we establish the following energy stability for the fully discrete schemes (\ref{eqn:BDF_OK_fully}, \ref{eqn:BDF_NO_fully}). The proofs of the theorems are similar to that of Theorem \ref{thm:OK/NO_energy}, the only difference is the substitution of $(-\Delta_{\mathbb{S}^{2}})^{-1}$ with $(-\Delta_{\mathbb{S}^{2},h})^{-1}$ and uses the $L^2$-bound of $(-\Delta_{\mathbb{S}^{2},h})^{-1}$ established in Lemma \ref{lemma:discrete_L^2}. We omit the details for brevity.

\begin{theorem}\label{thm:OK/NO_energy_fully} 
For the fully discrete scheme (\ref{eqn:BDF_OK_fully}) of pACSOK system, we define a modified energy functional as 
 \begin{align*}
\tilde{E}_h^{\mathrm{pSOK}}(U^{n},U^{n-1} ) = E_h^{\mathrm{pSOK}}(U^{n})+\left(\frac{\kappa_h}{2\epsilon}+\frac{1}{4\tau}+C_h\right)\|U^{n}-U^{n-1}\|_{L^2,h}^2+\frac{\gamma \beta_h}{2}\|(-\Delta_{\mathbb{S}^{2},h})^{-\frac{1}{2}}\left(U^{n}-U^{n-1}\right)\|_{L^2,h}^2.
\end{align*}
where $\{U^{n}\}_{n=1}^N$ is generated by scheme (\ref{eqn:BDF_OK_fully}). If $\frac{\kappa_h}{2\epsilon}+\frac{1}{4\tau}\ge0$, $\beta_h\ge0$, and $\frac{1}{\tau}\ge 3 C_h$, then
\begin{align*}
    \tilde{E}^{\mathrm{pSOK}}_h (U^{n+1},U^{n} ) \le \tilde{E}^{\mathrm{pSOK}}_h (U^{n},U^{n-1} ),
\end{align*}
where $C_h$ is a generic constant independent of the time step size $\tau$.

For the pACSNO system (\ref{eqn:BDF_NO_fully}), the modified energy functional is defined by
\begin{align*}
\tilde{E}_h^{\mathrm{pSNO}}[U_1^{n},U_1^{n-1},U_2^{n},U_2^{n-1}] =\ E_h^{\mathrm{pSNO}}[U_1^{n},U_2^{n}]+\sum_{i=1}^{2} &\bigg[\left(\frac{\kappa_{i,h}}{2\epsilon}+\frac{1}{4\tau}+C_{i,h}\right)\|U_i^{n}-U_i^{n-1}\|_{L^2,h}^2 \\
&+\frac{\gamma_{ii} \beta_{i,h}}{2}\|(-\Delta_{\mathbb{S}^2,h})^{-\frac{1}{2}}(U_i^{n}-U_i^{n-1})\|^2_{L^2,h}\bigg],
\end{align*}
where $\{U_1^{n},U_2^{n}\}_{n=1}^N$ is generated by scheme (\ref{eqn:BDF_NO_fully}). If $\kappa_{i,h},\beta_{i,h}\geq0$, $i=1,2$, and $\tau\leq \min\{\frac{1}{3C_{1,h}},\frac{1}{3C_{2,h}}\}$, then
\begin{align*}
     \tilde{E}^{\mathrm{pSNO}}[U_1^{n+1},U_1^{n},U_2^{n+1},U_2^{n}]\leq  \tilde{E}^{\mathrm{pSNO}}[U_1^{n},U_1^{n-1},U_2^{n},U_2^{n-1}],
\end{align*}
where $C_{i,h}$, $i = 1,2$ are generic constants independent of the time step size $\tau$.
\end{theorem}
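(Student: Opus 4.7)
The plan is to mirror the proof of Theorem \ref{thm:OK/NO_energy} in the fully-discrete setting, replacing continuous inner products, norms, and operators with their discrete counterparts $\langle\cdot,\cdot\rangle_h$, $\|\cdot\|_{L^2,h}$, $\Delta_{\mathbb{S}^2,h}$, and invoking Lemma \ref{lemma:discrete_L^2} in place of the continuous bound $\|(-\Delta_{\mathbb{S}^2})^{-1}\|_{L^2}$. Since discrete summation-by-parts is valid under the periodic BMC-I extension, all the integration-by-parts manipulations from the semi-discrete proof carry over; the only genuinely new ingredient needed is Lemma \ref{lemma:discrete_L^2}, which ensures that the constant $C_h$ in the modified energy remains finite and independent of the mesh size.

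For the pACSOK scheme \reff{eqn:BDF_OK_fully}, I would first take the discrete $L^2$ inner product of both sides against $U^{n+1}-U^{n}$. On the left side I apply the two algebraic identities $a\cdot(a-b)=\tfrac12 a^2-\tfrac12 b^2+\tfrac12(a-b)^2$ and $b\cdot(a-b)=\tfrac12 a^2-\tfrac12 b^2-\tfrac12(a-b)^2$ with $a=U^{n+1}-U^n$ and $b=U^n-U^{n-1}$ to extract telescoping differences, generating the $\|U^{n+1}-U^n\|_{L^2,h}^2$ surplus and the BDF2 stabilizer $\|U^{n+1}-2U^n+U^{n-1}\|_{L^2,h}^2$. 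On the right side, the diffusion term $\epsilon\langle-\Delta_{\mathbb{S}^2,h}U^{n+1},U^{n+1}-U^n\rangle_h$ telescopes into $\tfrac{\epsilon}{2}\|\nabla_{\mathbb{S}^2,h}U^{n}\|_{L^2,h}^2$ differences via discrete summation by parts; the two stabilization terms telescope identically to the continuous case. For the nonlinear term $\tfrac1\epsilon\langle 2W'(U^n)-W'(U^{n-1}),U^{n+1}-U^n\rangle_h$, I use the Taylor expansions $W'(U^n)(U^{n+1}-U^n)=W(U^{n+1})-W(U^n)-\tfrac{W''(\xi^n)}{2}(U^{n+1}-U^n)^2$ and $W'(U^n)-W'(U^{n-1})=W''(\mu^n)(U^n-U^{n-1})$ together with $|W''|\le L_{W''}$ and Young's inequality. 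For the nonlocal term $-\gamma\langle(-\Delta_{\mathbb{S}^2,h})^{-1}(2U^n-U^{n-1}-\omega),U^{n+1}-U^n\rangle_h$ and the penalty term, I apply Young's inequality together with Lemma \ref{lemma:discrete_L^2} to produce bounds proportional to $\|U^{n+1}-U^n\|_{L^2,h}^2+\|U^n-U^{n-1}\|_{L^2,h}^2$ with coefficient $C_h=\tfrac{L_{W''}}{2\epsilon}+\tfrac{\gamma}{2}\|(-\Delta_{\mathbb{S}^2,h})^{-1}\|_{L^2,h}+\tfrac{M}{2}|\Omega|$. Rearranging and adding $C_h(\|U^{n+1}-U^n\|_{L^2,h}^2-\|U^n-U^{n-1}\|_{L^2,h}^2)$ to both sides yields
\begin{equation*}
\tilde{E}_h^{\mathrm{pSOK}}(U^{n+1},U^n)-\tilde{E}_h^{\mathrm{pSOK}}(U^n,U^{n-1})\le\left(3C_h-\tfrac{1}{\tau}\right)\|U^{n+1}-U^n\|_{L^2,h}^2,
\end{equation*}
which is non-positive precisely when $\tau\le \tfrac{1}{3C_h}$.

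For the pACSNO scheme \reff{eqn:BDF_NO_fully}, I would apply the same procedure componentwise: take the discrete inner product of the $i$-th equation with $U_i^{n+1}-U_i^n$ for $i=1,2$, handle the mixed gradient term $\tfrac{\epsilon}{2}\langle\nabla_{\mathbb{S}^2,h}U_1^n,\nabla_{\mathbb{S}^2,h}U_2^n\rangle_h$ using the \texttt{mod} extrapolation (which, as in \cite{Choi_Zhao2021}, makes this term telescope cleanly), estimate the coupled potential $\partial W_2/\partial u_i$ by the same Taylor/Young's inequality device with Lipschitz constant $L_{W''}$, and bound the cross repulsive term $\gamma_{ij}(-\Delta_{\mathbb{S}^2,h})^{-1}(\cdots)$ using Lemma \ref{lemma:discrete_L^2}, producing the constants $C_{i,h}$ as in \reff{cst:pNO}. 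Summing the $i=1$ and $i=2$ inequalities gives the analogous dissipation inequality for $\tilde{E}_h^{\mathrm{pSNO}}$ under $\tau\le\min\{1/(3C_{1,h}),1/(3C_{2,h})\}$.

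The main obstacle I expect is verifying that discrete summation by parts holds cleanly on the BMC-I extended grid so that the diffusion and the inverse-Laplacian telescoping identities go through without boundary residues at $\theta=0,\pm\pi$; this relies on the periodicity enforced by the DFS construction in Section \ref{sec:review} and on the Hermitian-symmetric nature of the spectral differentiation matrices. Once this structural fact is confirmed, the rest of the argument is a routine transcription of the semi-discrete proof with continuous norms and operators replaced by discrete ones, which is precisely why the authors omit the details.
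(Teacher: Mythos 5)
Your proposal is correct and follows essentially the same route as the paper, which itself states that the fully-discrete proof is obtained from that of Theorem \ref{thm:OK/NO_energy} by replacing $(-\Delta_{\mathbb{S}^2})^{-1}$ with $(-\Delta_{\mathbb{S}^2,h})^{-1}$ and invoking the discrete $L^2$-bound of Lemma \ref{lemma:discrete_L^2}. Your additional remark that discrete summation-by-parts is justified by the periodicity of the BMC-I/DFS construction is consistent with how the paper handles the discrete bilinear form in the proof of Lemma \ref{lemma:discrete_L^2}.
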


To implement the fully-discrete scheme (\ref{eqn:BDF_OK_fully}), we take $\beta^*_h, \beta_h = 0$, $\kappa^*_h, \kappa_h \gg 1$, the time step size $\tau$, and initial condition $U^0$, then we compute the first-step solution $U^{1}$ from the BDF1 scheme \cite{Xu_Zhao2019} as follows:
\begin{enumerate}
\item Evaluate $(-\Delta_{\mathbb{S}^2,h})^{-1}(U^{0}-\omega)$ using the Helmholtz solver with $\alpha = 0$ in section \ref{subsec:HelmEq_sphere};
\item Insert $(-\Delta_{\mathbb{S}^2,h})^{-1}(U^{0}-\omega)$ into the right hand side of (\ref{eqn:OK_1st_order_scheme}) and solve another Helmholtz equation
    \begin{align}\label{eqn:OK_fully_Scheme}
    \left(-\epsilon\Delta_{\mathbb{S}^2,h}+\frac{1}{\tau_1}+\frac{\kappa^*_h}{\epsilon}\right)U^{1} = F_h^{0},
    \end{align}
where the term $F_h^{0}$ can be explicitly represented by
    \begin{align}\label{eqn:OK_fully_Scheme_RHS}
        F_h^{0} =  \frac{1}{\tau_1}U^0 - \frac{1}{\epsilon}W'(U^0)+\frac{\kappa^*_h}{\epsilon}U^{0} -\gamma(-\Delta_{\mathbb{S}^2,h})^{-1}\left(U^0-\omega\right)  -M\left[\langle U^0,1\rangle_h - \omega|\Tilde{\Omega}_h|\right].
    \end{align}
and the time step size $\tau_1 = \max\{\tau^2,1\}$.
\end{enumerate}
Then we repeat the following algorithm: for $n = 2, 3, \cdots$,
\begin{enumerate}
\item Evaluate $(-\Delta_{\mathbb{S}^2,h})^{-1}(2U^{n}-U^{n-1}-\omega)$ using the Helmholtz solver with $\alpha = 0$ in section \ref{subsec:HelmEq_sphere};
\item Insert $(-\Delta_{\mathbb{S}^2,h})^{-1}(2U^{n}-U^{n-1}-\omega)$ into the right hand side of (\ref{eqn:BDF_OK_fully}) and solve another Helmholtz equation
    \begin{align}\label{eqn:OK_fully_Scheme}
    \left(-\epsilon\Delta_{\mathbb{S}^2,h}+\frac{3}{2\tau}+\frac{\kappa_h}{\epsilon}\right)U^{n+1} = F_h^{n,n-1},
    \end{align}
where the term $F_h^{n,n-1}$ can be explicitly represented by
    \begin{align}\label{eqn:OK_fully_Scheme_RHS}
        F_h^{n,n-1} = & \frac{2}{\tau}U^n-\frac{1}{2\tau}U^{n-1} - \frac{1}{\epsilon}\left[2W'(U^n)-W'(U^{n-1})\right]+\frac{\kappa_h}{\epsilon}\left(2U^{n}-U^{n-1}\right) \nonumber\\
        & -\gamma(-\Delta_{\mathbb{S}^2,h})^{-1}\left(2U^n-U^{n-1}-\omega\right)  -M\left[\langle2U^n-U^{n-1},1\rangle_h - \omega|\Tilde{\Omega}_h|\right].
    \end{align}
\end{enumerate}

Similar algorithm applies to the fully-discrete scheme (\ref{eqn:BDF_NO_fully}) for the pACSNO system with $\beta^*_{i,h}, \beta_{i,h} = 0$ and $\kappa^*_{i,h}, \kappa_{i,h} \gg 1$ for $i = 1,2$.


\section{Numerical Experiments}\label{sec:numerical}
In this section, we present numerical experiments for the pACSOK and pACSNO systems on the domain $\Omega = [-\pi,\pi]\times [0,\pi] \subset \mathbb{R}^2$. We use the DFS method on the spatial domain $\tilde{\Omega} = [-\pi,\pi]\times [-\pi,\pi]$ and the BDF{2} schemes (\ref{eqn:BDF_OK_fully}, \ref{eqn:BDF_NO_fully}) for temporal discretization with proper stabilizers to explore the coarsening dynamics and pattern formation for the SOK and SNO models. For the spherical domain, we take a uniform mesh grid with $N_{\phi} = N_{\theta} = 2^8$ along both azimuth and polar directions, resulting in the mesh size of $h = \frac{2\pi}{N_{\phi}} = \frac{2\pi}{N_{\theta}}$. The stopping criteria for the time iterations are set to 
\begin{align*}
    \frac{\|U^{n+1}-U^{n}\|_{h,L^{\infty}}}{\tau} \leq 10^{-5}
\end{align*}
for pACSOK equation with approximate solution $U^{n}$ at $n$-th step, and 
\begin{align*}
     \frac{\|U_1^{n+1}-U_1^{n}\|_{h,L^{\infty}}}{\tau}+\frac{\|U_2^{n+1}-U_2^{n}\|_{h,L^{\infty}}}{\tau} \leq 10^{-5}
\end{align*}
for pACSNO equation with approximate solution $U_1^{n},U_2^{n}$ at $n$-th step.

In our numerical simulations of the SOK model, we consider a small relative volume $\omega\ll 1$ for one phase (species $A$) and a relatively high repulsive strength $\gamma\gg 1$. Under these conditions, the pACSOK dynamics yields an equilibrium state characterized by the single bubble assembly, where one phase (species $A$) is embedded into the other (species $B$). For the SNO model, with similarly small relative volumes $\omega_1 \ll 1$ for species $A$ and $\omega_2 \ll 1$ for species $B$, we observe several types of bubble assemblies at equilibrium. Here, species $A$ and $B$ are embedded in species $C$, and the equilibria are influenced by different repulsive strengths $(\gamma_{ij})_{i,j=1,2}$  in pACSNO systems. Moreover, we assume equal relative volumes $\omega_{1} = \omega_{2}$ and set the repulsive strengths symmetrically as $\gamma_{11} = \gamma_{22}$, $\gamma_{12} = \gamma_{21}$ in our numerical experiments. In addition, all the numerical experiments are tested using MATLAB on a 2022 MacBook Pro with an M2 chip and 32 GB of memory.

\subsection{Rate of Convergence}\label{sub:Conv_rate}

In this subsection, we validate the rate of convergence of the BDF2 scheme for both pACSOK (\ref{eqn:pACOK}) and pACSNO(\ref{eqn:pACNO}). 

For the pACSOK equation, we initialize a random circle on the unit sphere $\Tilde{\Omega}= [-\pi,\pi]\times [-\pi,\pi]$, with the following initial condition:
\begin{align*}
    U^0(\phi,\theta) = \left\{
    \begin{array}{ll}
       1,  &  \text{if } (\phi-\phi_0)^2 + (\theta-\theta_0)^2 < r_0^2,\\
       0,  &  \text{otherwise},
    \end{array}
    \right.
\end{align*}
where the center $(\phi_0,\theta_0)$ is randomly chosen from $\phi_0 \in [-\pi,\pi]$, $\theta_0\in[0,\pi]$, and the radius is set to $r_0 = \sqrt{2\pi\omega}+0.2$. While using the parameters $\omega = 0.15$, $\gamma = 100$, $\kappa = 2000$, $\beta = 0$ and $M = 1000$, we implement the fully discrete scheme (\ref{eqn:pACOK_scheme}) for the numerical experiments. The convergence test is performed with $\epsilon=10h$ up to time $T=0.01$, with the benchmark solution computed using a time step size of $\tau = 1\times 10^{-6}$. Table \ref{table:pACOK/NO_rateofconv} presents the error and corresponding rate of convergence at a fixed time $T=0.01$ on the left. It can be observed that the numerical rates for $\epsilon = 10h$ are approximately equal to $2$ for small time step size $\tau$, which aligns with theoretical predictions.

Similarly, for the pACSNO system, the initial condition consists of two random circles representing $U_1$ and $U_2$, respectively, on the unit sphere $\Tilde{\Omega}= [-\pi,\pi]\times [-\pi,\pi]$:
\begin{align*}
        U_1^0(\phi,\theta) = \left\{
    \begin{array}{ll}
       1,  &  \text{if } (\phi-\phi_1)^2 + (\theta+\theta_1)^2 < r_1^2,\\
       0,  &  \text{otherwise},
    \end{array}
    \right. \\
        U_2^0(\phi,\theta) = \left\{
    \begin{array}{ll}
       1,  &  \text{if } (\phi+\phi_2)^2 + (\theta-\theta_2)^2 < r_2^2,\\
       0,  &  \text{otherwise},
    \end{array}
    \right.
\end{align*}
where the centers $(\phi_1,\theta_1)$ and $(\phi_2,\theta_2)$ are randomly chosen from $\phi_1, \phi_2 \in [-\pi,\pi]$, $\theta_1, \theta_2\in[0,\pi]$, with radii $r_1 = \sqrt{2\pi\omega_1}+0.1$ and $r_2 = \sqrt{2\pi\omega_2}+0.1$. In the numerical simulation, we set parameters $\omega_{1} = \omega_{2} = 0.09$, $\gamma_{11} = \gamma_{22} = 100$, $\gamma_{12} = \gamma_{21} = 0$, $\kappa_1 = \kappa_2 = 2000$, $\beta_1 = \beta_2 = 0$ and $M_1 = M_2 = 1000$ and apply the fully discrete scheme (\ref{eqn:pACNO_scheme}). The test runs up to $T = 0.01$ for $\epsilon = 10h$, with the benchmark solution calculated using a time step size of $\tau = 1\times 10^{-6}$. The right side of Table \ref{table:pACOK/NO_rateofconv} displays the errors and convergence rates for the pACSNO equations at $T=0.01$. Similar to the pACSOK equation, the numerical rates for a small time step size $\tau$ closely match the theoretical value of $2$.

\begin{table}[h!]
\begin{center}
\begin{tabular}{ |c||c|c|c|c|  }
\hline
 & \multicolumn{2}{|c|}{pACSOK equation} & \multicolumn{2}{|c|}{pACSNO equation} \\
\hline
$\tau$ & $\|U^{\tau}-U^{\text{benchmark}}\|_{L^2,h}$ & Rate & $\sum_{i=1}^{2}\|U_{i}^{\tau}-U_{i}^{\text{benchmark}}\|_{L^2,h}$ & Rate \\
\hline
5.000e-4 & 1.011190e-1 & -       & 2.546882e-1 &  - \\
2.500e-4 & 3.040393e-2 & 1.733724 & 8.044150e-2 & 1.662720\\
1.250e-4 & 8.989498e-3 & 1.757945 & 2.369693e-2 & 1.763240\\
6.250e-5 & 2.648314e-3 & 1.763166 & 7.244459e-3 & 1.709751\\
3.125e-5 & 7.162332e-4 & 1.886573 & 2.094053e-3 & 1.790581\\
1e-6 (benchmark) & - &- &-&- \\
\hline
\end{tabular}
\end{center}
\caption{Rate of convergence for the fully-discrete schemes of the pACSOK (\ref{eqn:BDF_OK_fully}) with parameters $\omega = 0.15$, $\gamma = 100$, $\kappa = 2000$, $\beta = 0$ and $M = 1000$, and the pACSNO (\ref{eqn:BDF_NO_fully}) with parameters $\omega_{1} = \omega_{2} = 0.09$, $\gamma_{11} = \gamma_{22} = 100$, $\gamma_{12} = \gamma_{21} = 0$, $\kappa_{1} = \kappa_{2} = 2000$, $\beta_{1}=\beta_{2} = 0$ and $M_1 = M_2 = 1000$.}
\label{table:pACOK/NO_rateofconv}
\end{table}

\subsection{Energy Stability for SOK and SNO Models}\label{sub:Energy_stab}

In this subsection, we study the energy stability for the pACSOK and pACSNO systems using the schemes (\ref{eqn:BDF_OK_fully}) and (\ref{eqn:BDF_NO_fully}), respectively. 

For the pACSOK equation, the initial data is randomly generated on a uniform mesh of the mapped spherical domain $\tilde{\Omega} = [-\pi,\pi]\times [-\pi,\pi]$ with a mesh size $16h$. The MATLAB command for such random initials is 
\begin{align*}
    \left[\begin{array}{cc}
        \texttt{repelem}(\texttt{rand}(N_{\phi}/\texttt{ratio},N_{\theta}/(\texttt{ratio}/4)),\texttt{ratio},\texttt{ratio})^\mathrm{T}
    \end{array}\right]
\end{align*}
with ratio = $16$ used for all binary system experiments, each starting from a random initial state. 

For the pACSNO equation, phase separation of the ternary system can be computationally expensive when starting from a fully random initial state similar to the binary system, often costing several days for convergence. To avoid this, we construct a 'semi-random' initial condition to reduce the computation time. This approach partitions the spatial domain $\tilde{\Omega} = [-\pi,\pi]\times [-\pi,\pi]$ into a random number of small patches, and we randomly place circles with random centers and radii in each patch, allowing overlaps between circles. 

In the binary system, we set the following parameters fixed: the time step size $\tau = 1\times10^{-3}$, relative area $\omega = 0.15$, stabilization constant $\kappa = 2000$, $\beta = 0$, and penalty constant $M = 1000$. For the ternary system, the corresponding parameters are fixed by $\tau = 2\times10^{-4}$, $\omega_1=\omega_2 = 0.09$, $\kappa_1 = \kappa_2 = 2000$, $\beta_1=\beta_2=0$ and $M_1 =M_2= 1000$.

Figure \ref{fig:binary_system_energy} presents the energy stability of the pACSOK equation with the numerical scheme (\ref{eqn:BDF_OK_fully}) and displays the equilibrium state for the binary system with repulsive strength $\gamma = 1500$. On the top figure of (\ref{fig:binary_system_energy}), the energy curve monotonically decreases towards the equilibrium state. In each subfigure, we insert snapshots taken at the initial state and at four different times $t$, marked by colored titles corresponding to markers on the energy curve. Starting from the random initial data, the phase separation arises rapidly, leading to the formation of bubbles of different sizes (t = 5). Over time, these bubbles grow into the same size on the surface of a unit sphere, eventually reaching equal sizes (t = 15, 40, 70), similar to previous results observed on square and disk domains \cite{Choi_Zhao2021, LuoZhao_AAMM2024}. The bottom figure in Figure \ref{fig:binary_system_energy} displays three views (front-to-back, left-to-right, and top-to-bottom) of the equilibrium state with 18 single bubbles in the binary system. Further numerical results on the equilibrium states for the SOK model are detailed in subsection (\ref{sub:Equi_OKNO}).

Similarly, Figures \ref{fig:ternary_system_energy_6b} and \ref{fig:ternary_system_energy_8b}, with parameters $\gamma_{11}=\gamma_{22} = 350$, $\gamma_{12}= 0$ and $\gamma_{11}=\gamma_{22} = 500$, $\gamma_{12}= 0$, respectively, show the energy stability for the pACSNO equation using scheme (\ref{eqn:BDF_NO_fully}). The insets in these figures display snapshots taken from a semi-random initial state and at four different times $t$, which display the progress of phase separation to the double-bubble patterns ($t=1$), with double bubbles growing simultaneously until reaching maximum separation across the sphere. The bottom figures in Figures (\ref{fig:ternary_system_energy_6b}) and (\ref{fig:ternary_system_energy_8b}) show three views (front-to-back, left-to-right, and top-to-bottom) of the equilibrium state for the ternary system with 6 and 8 bubbles, respectively. Additional numerical results of the equilibrium states for the SNO model are provided in subsection (\ref{sub:Equi_OKNO}).

\begin{figure}[htbp]
\begin{center}
\includegraphics[width=.8\textwidth]{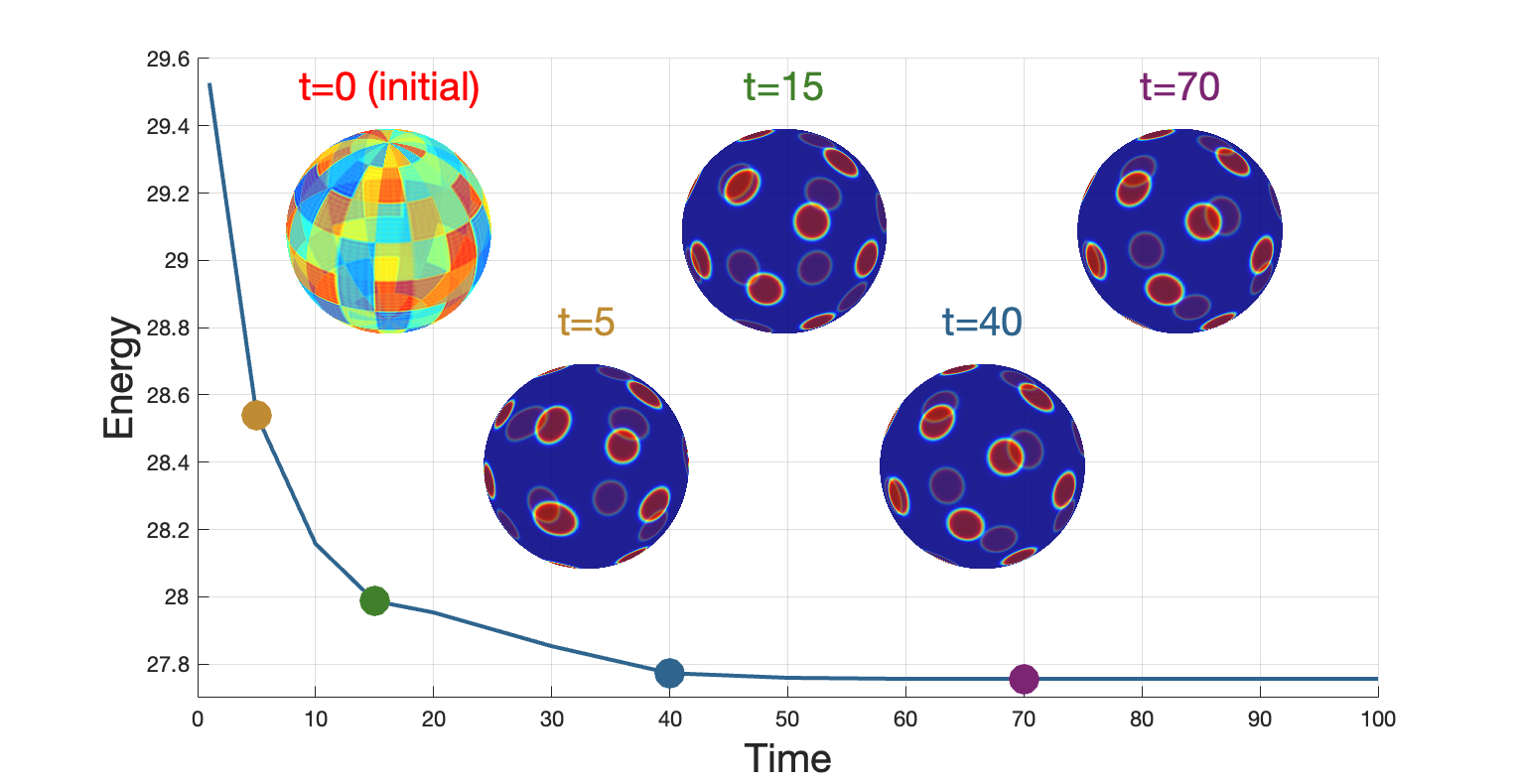}
\includegraphics[width=.8\textwidth]{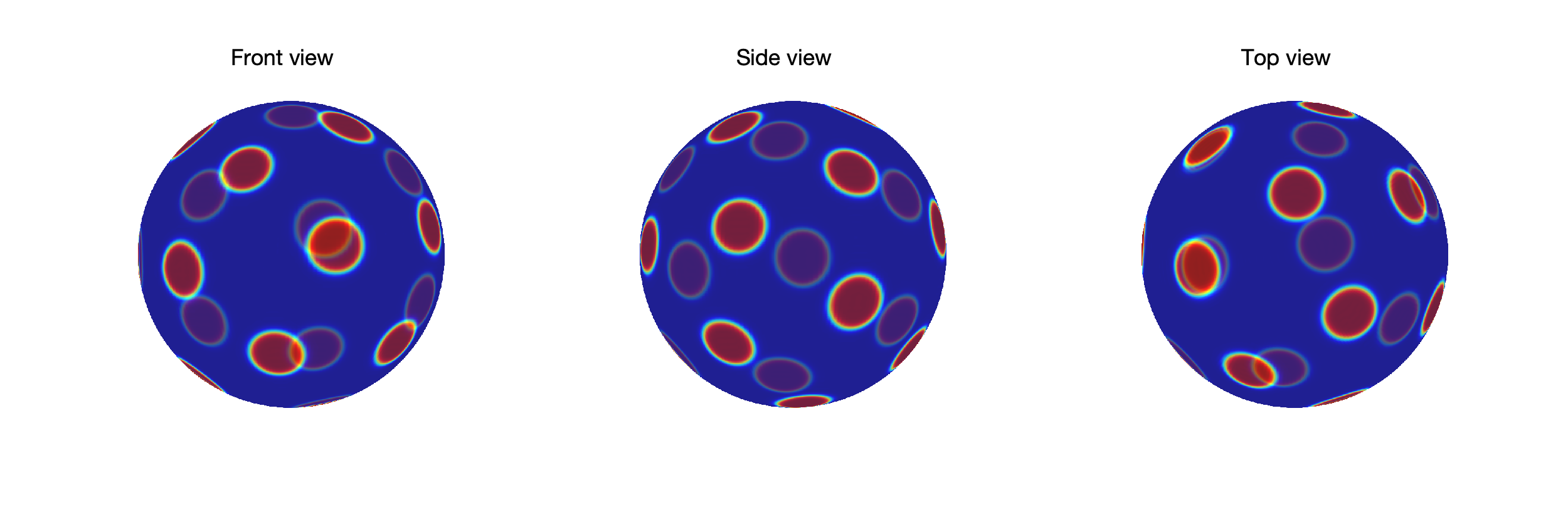}
\end{center}
\caption{(Top) Coarsening dynamics and energy stability for the SOK model with 18 double bubbles, and the corresponding parameters $\omega_{1} = 0.15$, $\gamma=1500$, $\kappa = 2000$, $\beta = 0$ and $M = 1000$. (Bottom) Three views (front view, side view, top view) of the equilibrium state with 18 bubbles.}
\label{fig:binary_system_energy}
\end{figure}

\begin{figure}[htbp]
\begin{center}
\includegraphics[width=.8\textwidth]{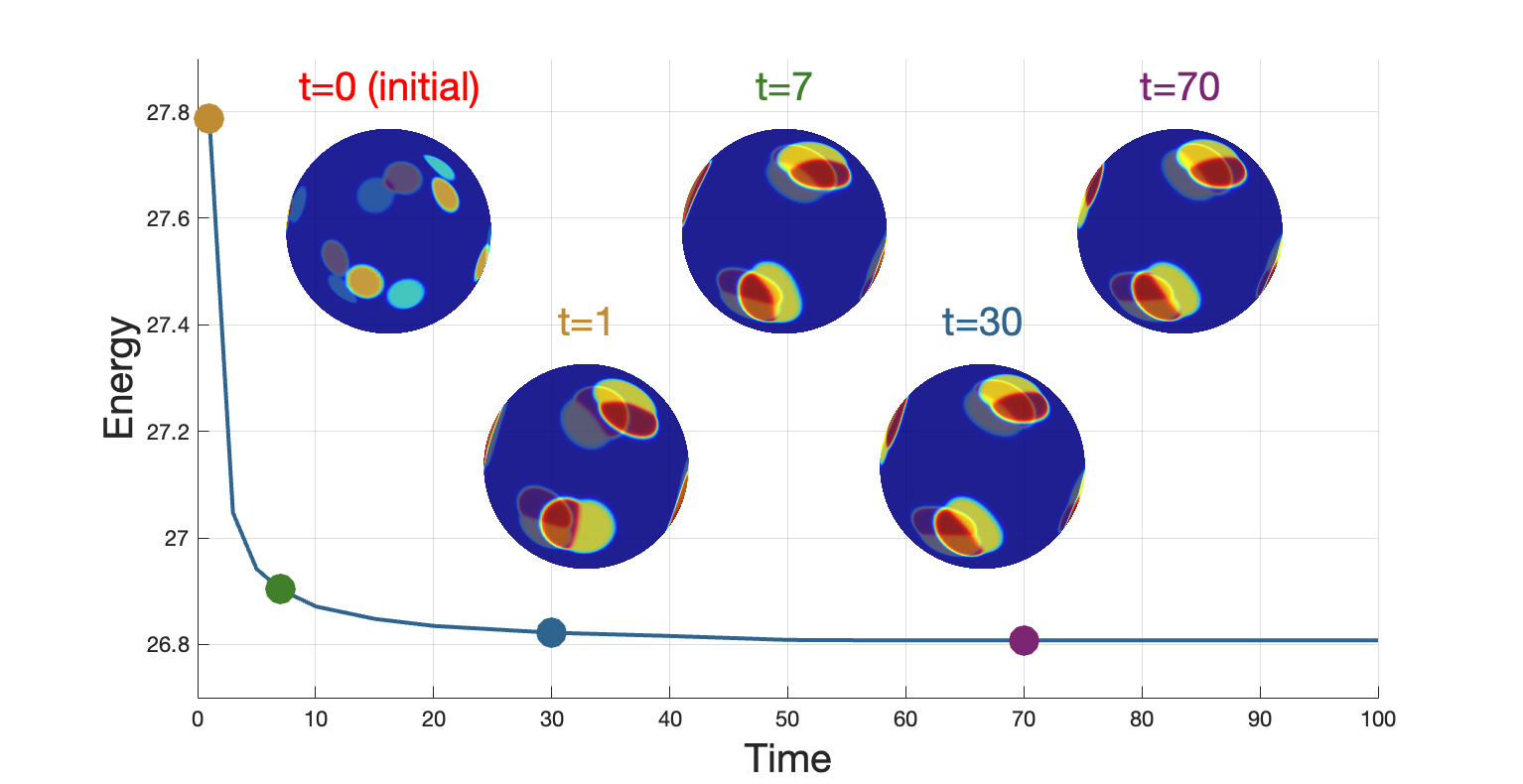}
\includegraphics[width=.8\textwidth]{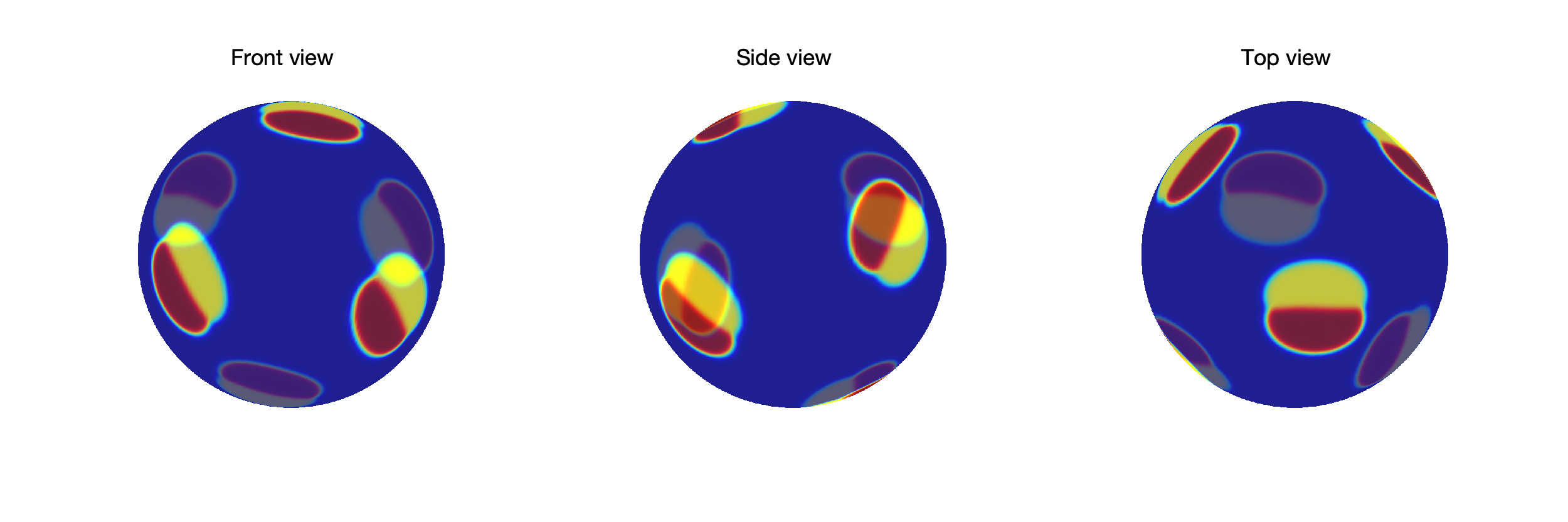}
\end{center}
\caption{(Top) Coarsening dynamics and energy stability for the SNO model with 6 double bubbles. The values of the parameters are $\omega_{1} = \omega_{2} = 0.09$, $\gamma_{11} =  \gamma_{22}=350, \gamma_{12} = \gamma_{21}=0$, $\kappa_1 = \kappa_2 = 2000$, $\beta_1 = \beta_2 = 0$ and $M_1 = M_2 = 1000$. (Bottom) Three views (front view, side view, top view) of the equilibrium state with 6 bubbles.}
\label{fig:ternary_system_energy_6b}
\end{figure}

\begin{figure}[htbp]
\begin{center}
\includegraphics[width=.8\textwidth]{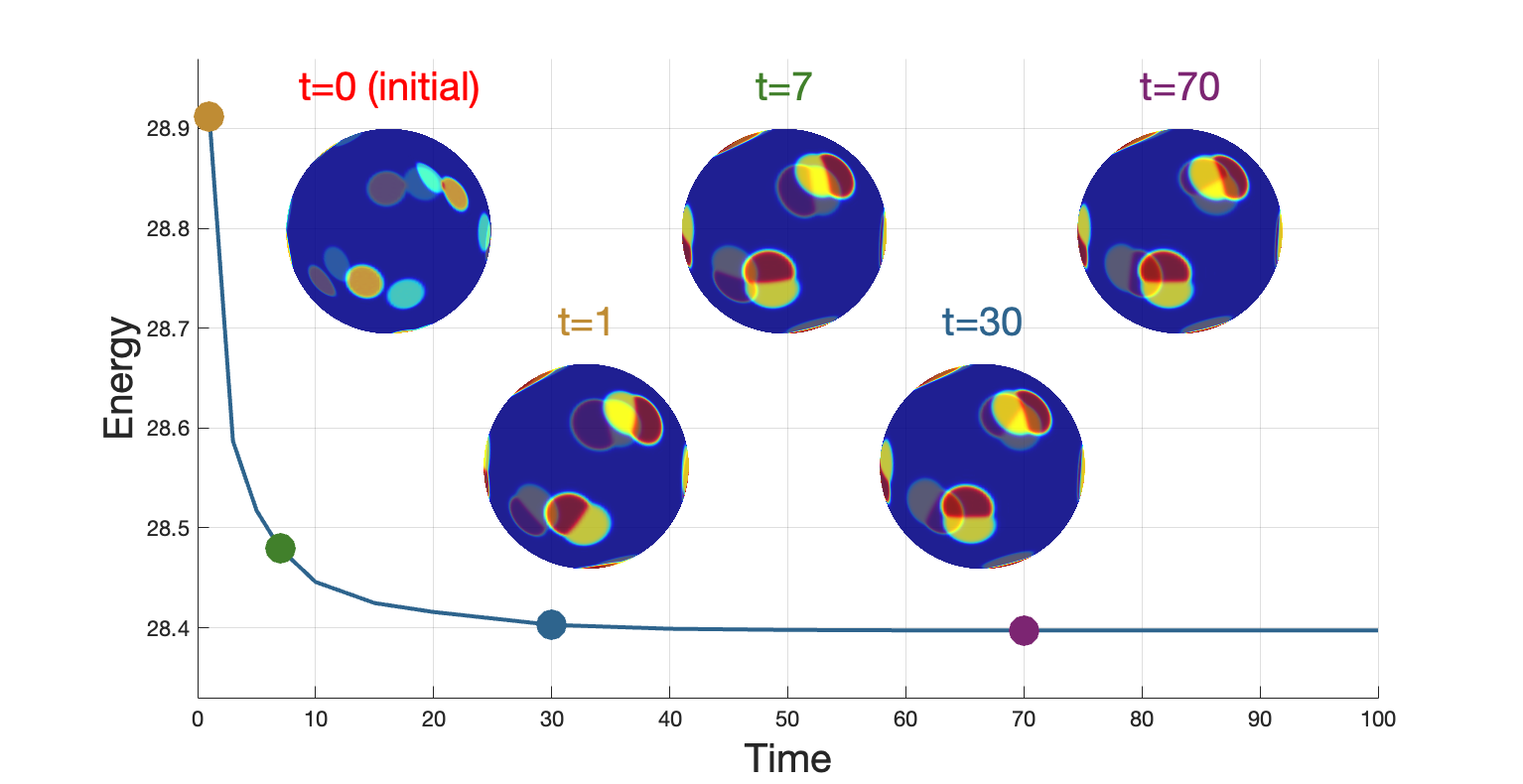}
\includegraphics[width=.8\textwidth]{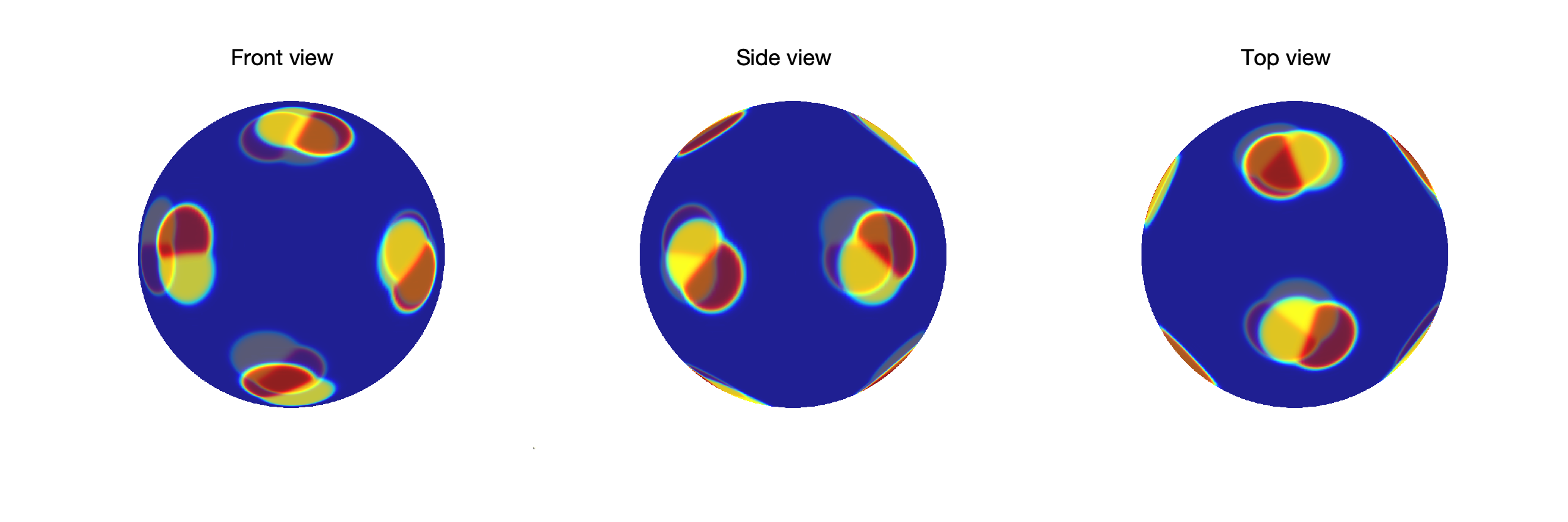}
\end{center}
\caption{(Top) Coarsening dynamics and energy stability for the SNO model with 8 double bubbles, and the corresponding parameters are $\omega_{1} = \omega_{2} = 0.09$, $\gamma_{11} =  \gamma_{22}=500, \gamma_{12} = \gamma_{21}=0$, $\kappa_1 = \kappa_2 = 2000$, $\beta_1 = \beta_2 = 0$ and $M_1 = M_2 = 1000$. (Bottom) Three views (front view, side view, top view) of the equilibrium state with 8 bubbles.}
\label{fig:ternary_system_energy_8b}
\end{figure}

\subsection{Equilibrium States for SOK and SNO Models}\label{sub:Equi_OKNO}

In this subsection, we present the self-assembled patterns at the equilibria for the SOK and SNO models under different repulsive strengths $\gamma$ and $\gamma_{11}$, respectively. 

For the binary system, Figures \ref{fig:binary_system_energy} and \ref{fig:binary_sys_equi} display the self-assembled single-bubble patterns at the equilibria, where the repulsive  force $\gamma$ varies as $\gamma = 1500, 7000, 9000, 13000$ and other parameters are fixed. The equilibria for the SOK model result in assemblies of equally sized and equally separated single bubbles, forming an approximately hexagonal pattern on the unit sphere. As the value of $\gamma$ increases, the stronger repulsive force leads to an increase in the number of bubbles. This trend is depicted in Figure \ref{fig:binary_sys_equi}. From top to bottom, the number of bubbles increases from 48 to 68 to 80 as $\gamma$ increases.
\begin{figure}[htbp]
\begin{center}
\begin{picture}(55,40)
  \put(15, 60){\makebox(0,0){ $\gamma = 7000$:}}
\end{picture}
\includegraphics[width=.8\textwidth]{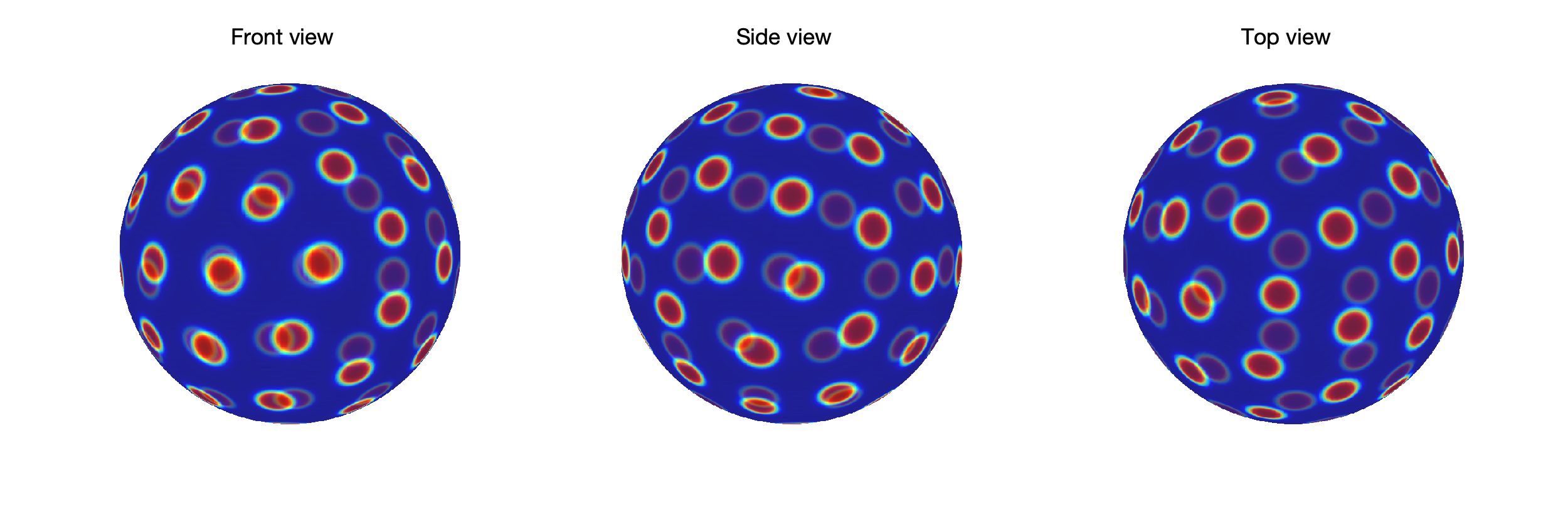}\\
\begin{picture}(55,40)
  \put(15, 60){\makebox(0,0){ $\gamma = 11000$:}}
\end{picture}
\includegraphics[width=.8\textwidth]{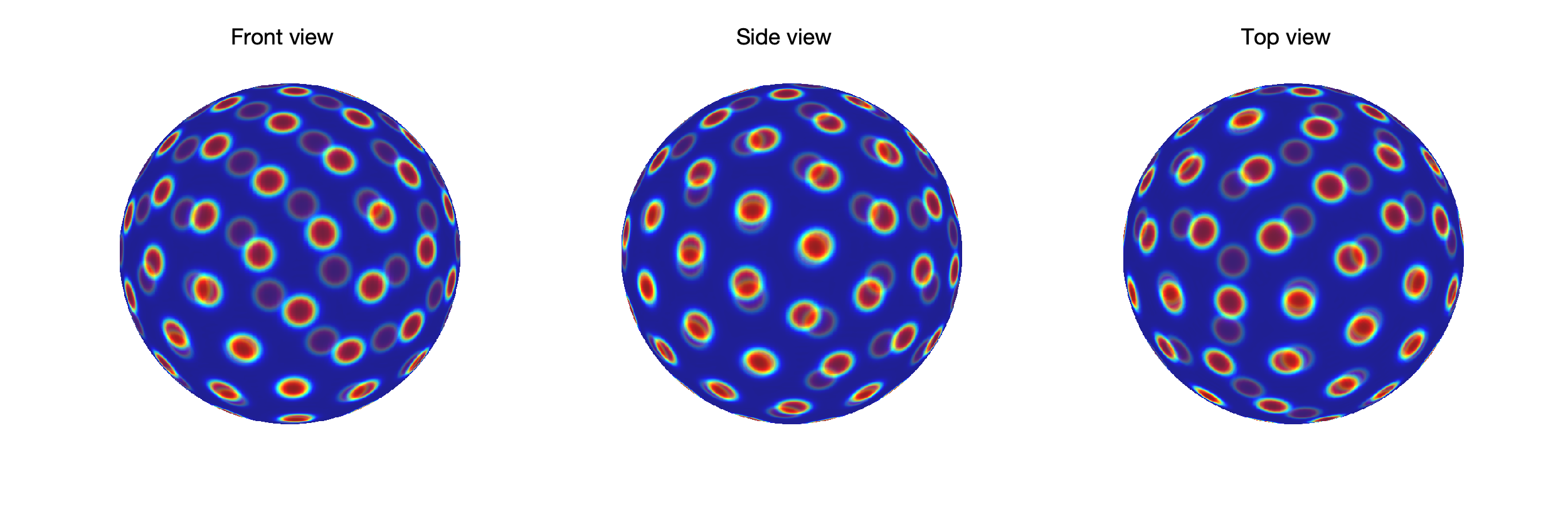}\\
\begin{picture}(55,40)
  \put(15, 60){\makebox(0,0){ $\gamma = 13000$:}}
\end{picture}
\includegraphics[width=.8\textwidth]{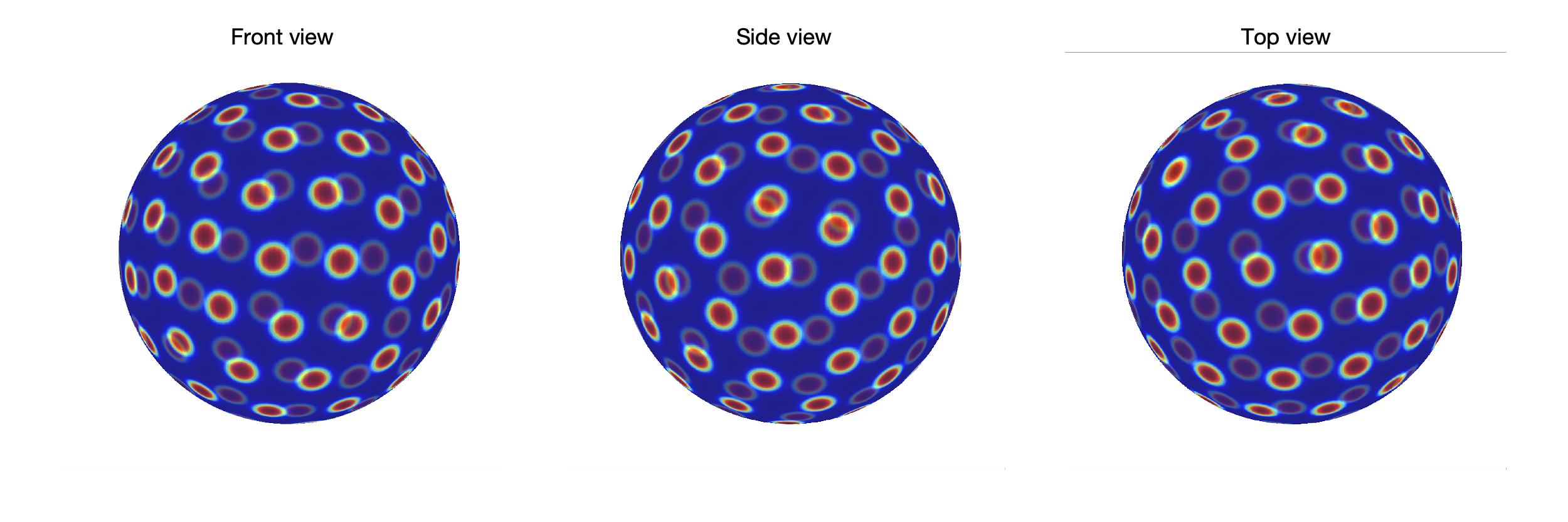}
\end{center}
\caption{Equilibrium states for the SOK model with parameters $\omega = 0.15$, $\kappa = 2000$, $\beta = 0$ and $M = 1000$: (Top) $\gamma=7000$, three views for the same equilibrium state; (Middle) $\gamma=9000$, three views for the same equilibrium state; (Bottom) $\gamma=13000$, three views for the same equilibrium state.}
\label{fig:binary_sys_equi}
\end{figure}


For the ternary system, the equilibrium states are shown in Figures \ref{fig:ternary_system_energy_6b}, \ref{fig:ternary_system_energy_8b}, and \ref{fig:ternary_sys_equi}. Here, the repulsive strengths $\gamma_{11} = \gamma_{22}= 350, 500, 5000, 6000, 7000$, while $\gamma_{12}$ remains fixed at zero, and other parameters are unchanged. The equilibria for the SNO model present the double-bubble patterns with synchronized red and yellow bubbles, and also form an approximately hexagonal structure on the unit sphere. As the value of $\gamma_{11}$ becomes greater, the number of bubbles increases. This effect is displayed in Figure \ref{fig:ternary_sys_equi} with 42, 48, and 52 bubbles from top to bottom for increasing values of $\gamma_{11} = \gamma_{22}$.
\begin{figure}[htbp]
\begin{center}
\begin{picture}(55,40)
  \put(15, 60){\makebox(0,0){ $\gamma_{11}=\gamma_{22} = 5000$:}}
\end{picture}
\includegraphics[width=.8\textwidth]{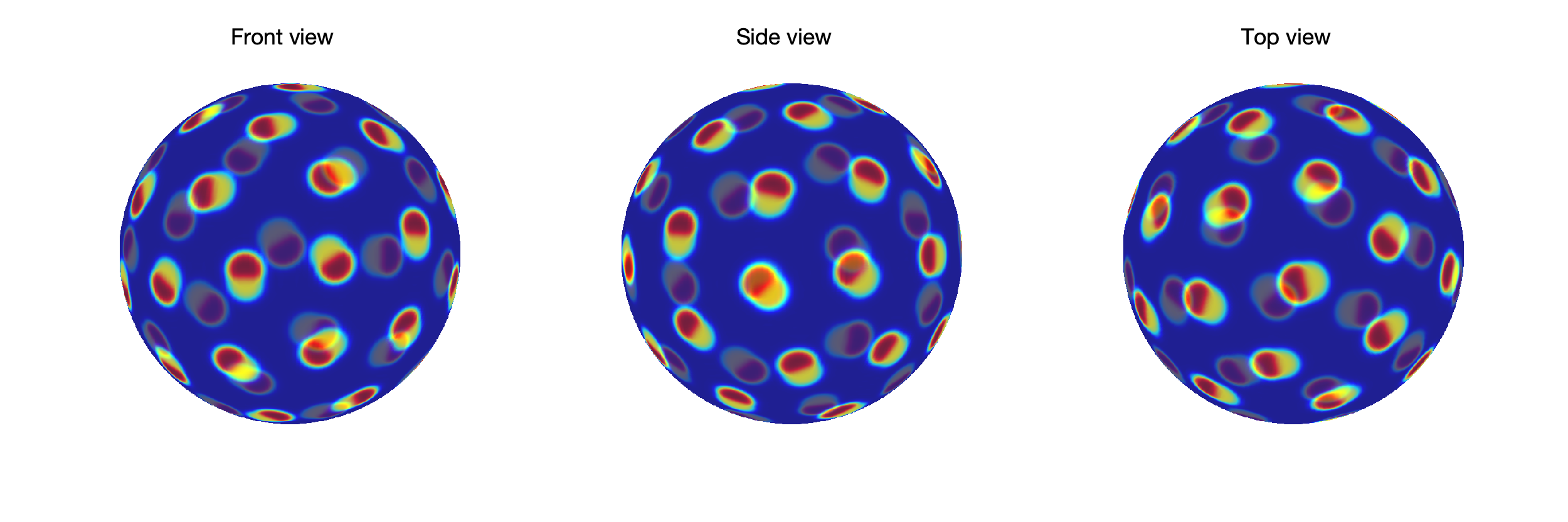}\\
\begin{picture}(55,40)
  \put(15, 60){\makebox(0,0){ $\gamma_{11}=\gamma_{22} = 6000$:}}
\end{picture}
\includegraphics[width=.8\textwidth]{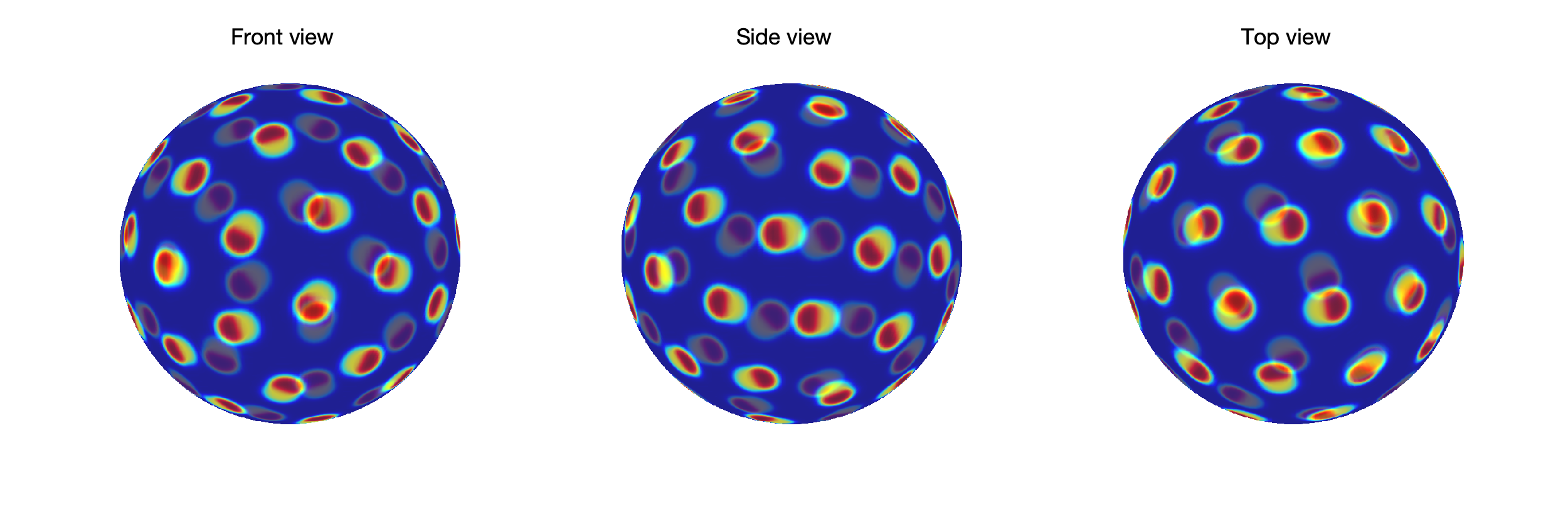}\\
\begin{picture}(55,40)
  \put(15, 60){\makebox(0,0){ $\gamma_{11}=\gamma_{22} = 7000$:}}
\end{picture}
\includegraphics[width=.8\textwidth]{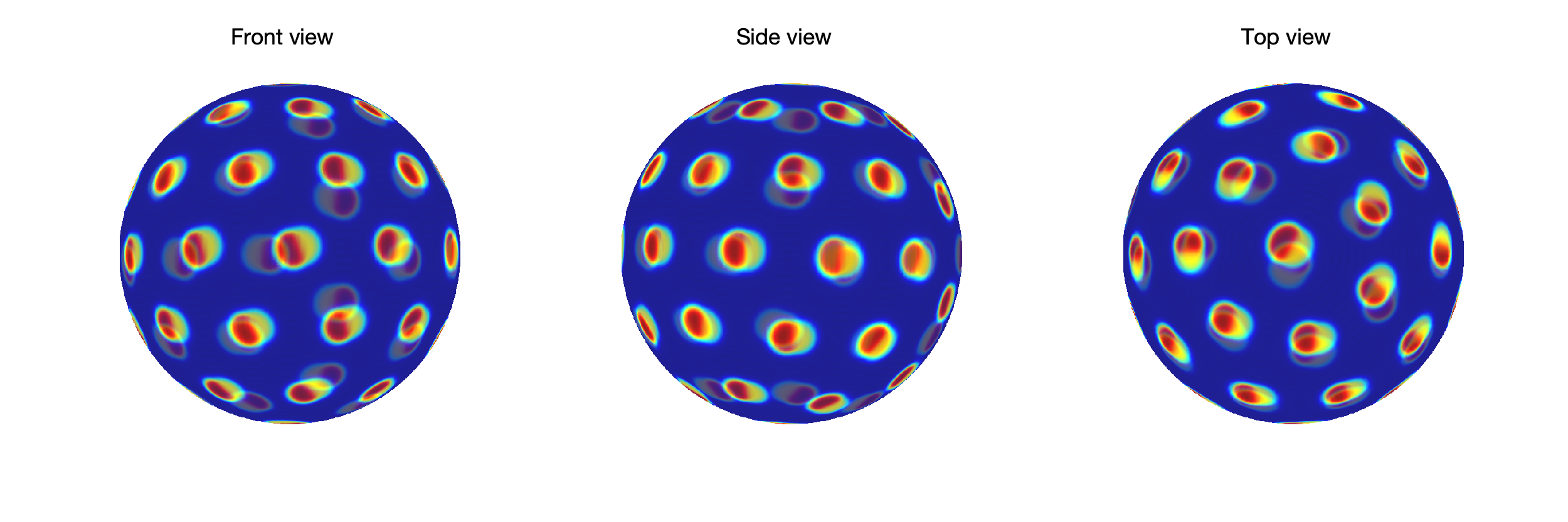}
\end{center}
\caption{Equilibrium states with three views for the SNO model with parameters $\omega_{1} = \omega_{2} = 0.09$, $\kappa_1 = \kappa_2 = 2000$, $\beta_1 = \beta_2 = 0$ and $M_1 = M_2 = 1000$: (Top) $\gamma_{11} = \gamma_{22}=5000$, $\gamma_{12} = \gamma_{21}=0$; (Middle) $\gamma_{11} = \gamma_{22}=6000$, $\gamma_{12} = \gamma_{21}=0$; (Bottom) $\gamma_{11} = \gamma_{22}=7000$, $\gamma_{12} = \gamma_{21}=0$.}
\label{fig:ternary_sys_equi}
\end{figure}

\subsection{The Effect of $\gamma$ for SOK Model and $\gamma_{11} = \gamma_{22}$ for SNO Model}\label{sub:gamma_effect_OKNO}

In this subsection, we investigate the influence of the repulsive strengths $\gamma$ and $\gamma_{11}$ on the number of bubbles in the SOK and SNO models, respectively. Previous work by Wang, Ren, and Zhao \cite{Wang_Ren_Zhao2019} suggested a relationship between the number of bubbles $K_b$ and the repulsive strengths $\gamma_{11}$ for the NO model in the square domain, observing a two-thirds power law, where $K_b \sim \gamma_{11}^{2/3}$. 

Our numerical experiments, as shown in Figure (\ref{fig:Gamma_23}), confirm similar behavior on the spherical surface, where the number of double bubbles $K_b$ in an assembly follows a two-thirds law relationship with the repulsive strengths $\gamma$ and $\gamma_{11}$ in the SOK and SNO models, respectively. The insets on the top figure of Figure (\ref{fig:Gamma_23}) present the equilibrium states for the SOK model with values of $\gamma = 1500, 7000, 13000$, while the bottom insets display the equilibrium states for the SNO model with $\gamma = 500, 4000, 8000$.

Due to the non-convexity of the SOK and SNO models, the steady states of the pACSOK and pACSNO systems often represent local minima, influenced by the repulsive forces and random initial conditions, rather than the global minimum of the energy functionals. To approximate the global minimum more closely, we conduct tens of experiments under the same repulsive force with various random initial conditions and identify the equilibrium state that achieves the lowest energy, which is treated as the global minimum. This approach allows us to estimate the global minimum more accurately and count the corresponding number of bubbles $K_b$, yielding more precise data for the two-thirds law plot as in Figure (\ref{fig:Gamma_23}).

\begin{figure}[htbp]
\begin{center}
\includegraphics[width=.8\textwidth]{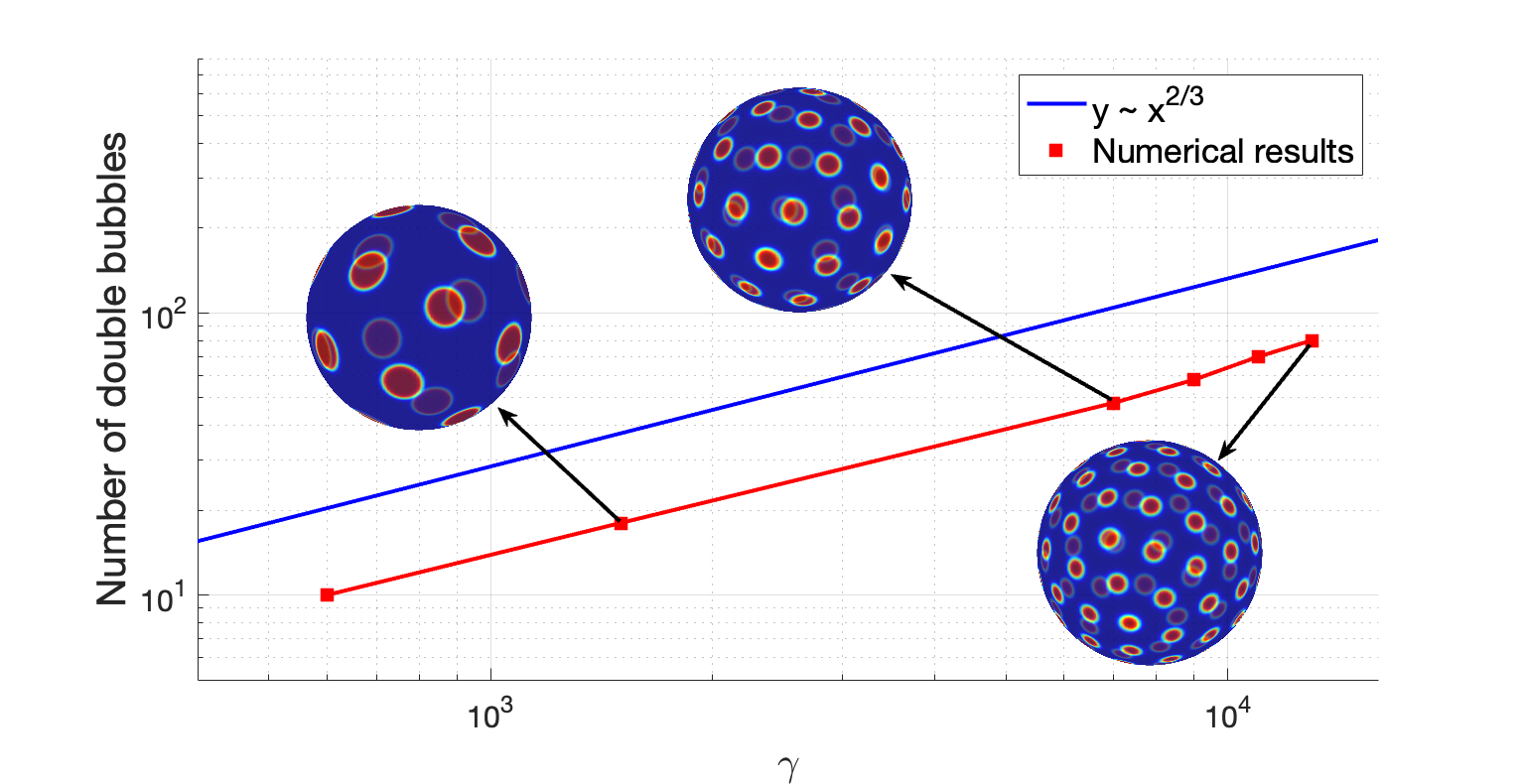}
\includegraphics[width=.8\textwidth]{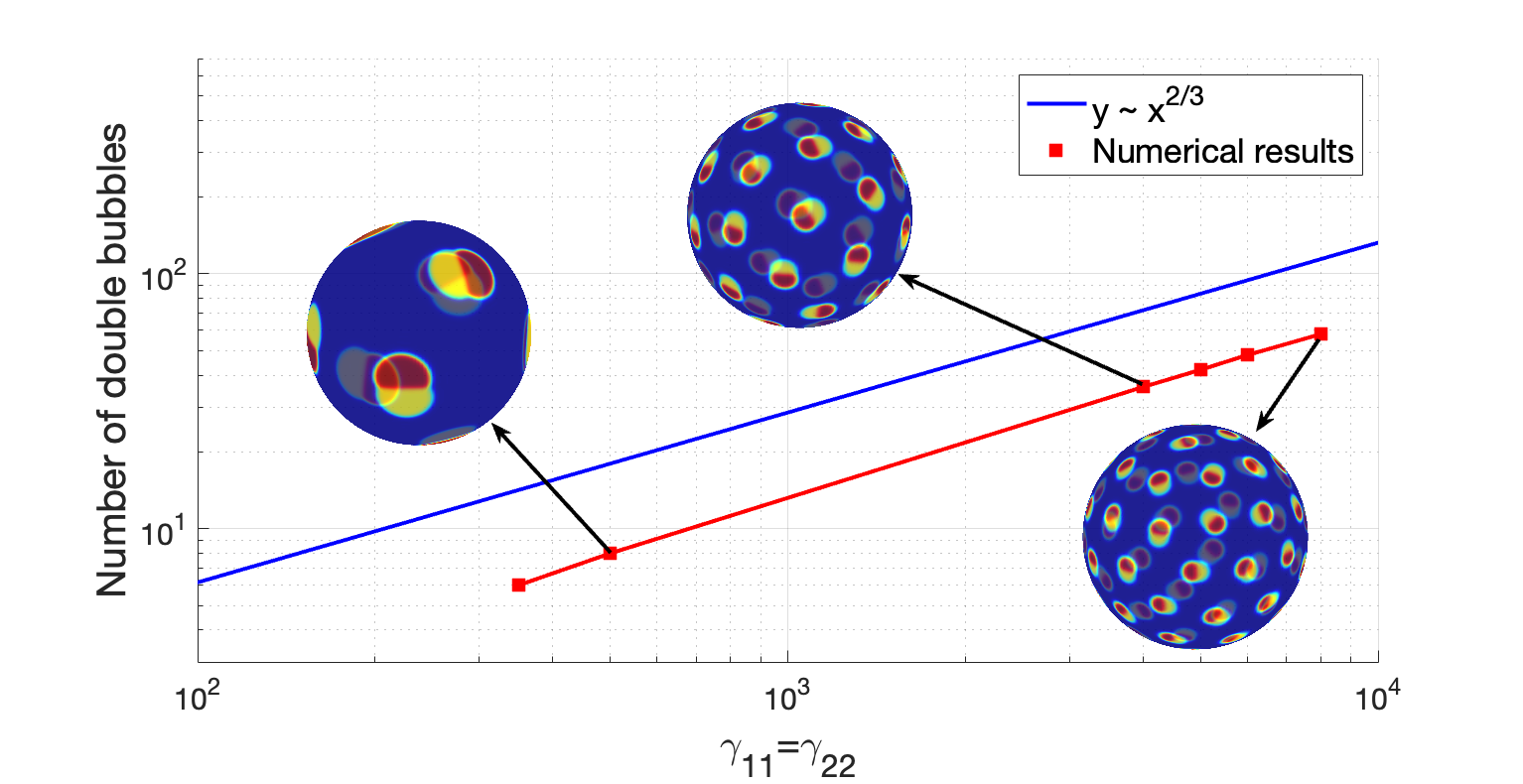}
\end{center}
\caption{(Top) Log-log plot of the dependence of the number of single bubbles for SOK model on $\gamma$. Here $\omega = 0.15$, $\kappa = 2000$, $\beta = 0$ and $M = 1000$. As $\gamma$ increases, the number of bubbles in the assembly grows accordingly. For $\gamma$ = 600, 1500, 7000, 9000, 11000, 13000, the corresponding number of double bubbles are 10, 18, 48, 58, 68 and 80, respectively. (Bottom) Log-log plot of the dependence of the number of double bubbles for SNO model on $\gamma_{11}=\gamma_{22}$. Here $\omega_{1} = \omega_{2} = 0.09$, $\gamma_{12}=\gamma_{21}=0$, $\kappa_1 = \kappa_2 = 2000$, $\beta_1 = \beta_2 = 0$ and $M_1 = M_2 = 1000$. As $\gamma_{11}$ increases, the number of double bubble assemblies grows accordingly. For $\gamma_{11}$ = 350, 500, 4000, 5000, 6000, 8000, the corresponding number of double bubbles are 6, 8, 36, 42, 48 and 58, respectively.}
\label{fig:Gamma_23}
\end{figure}

\subsection{The Effect of $\gamma_{12}$ for SNO Model}\label{sub:gamma12_effect_NO}

In this subsection, we investigate the effect of the repulsive strengths $\gamma_{12}, \gamma_{21}$ on pattern formation in the SNO model and keep $\gamma_{12} = \gamma_{21}$. Figure (\ref{fig:gamma12_effect}) displays the impact of varying $\gamma_{12}$. While the repulsive force $\gamma_{11}$ ($\gamma_{22}$, respectively) promotes the separation of the same component (red and yellow respectively), the repulsive strength $\gamma_{12} = \gamma_{21}$ can be interpreted as the splitting force between red and yellow components, namely, for a fixed value of $\gamma_{11} = \gamma_{22}$, increasing $\gamma_{12}$ tends to separate the red and yellow constituents.

While taking $\gamma_{11}=\gamma_{22} = 6000$, we vary the value of $\gamma_{12}$ as follows: $\gamma_{12} = 0, 2000, 4000, 5000, 6000, 8000$. When $\gamma_{12} = \gamma_{21} = 0$ (Figure \ref{fig:gamma12_effect}, top left), there is no force to separate the red and yellow components; thus, they adhere together, which results in all double-bubble assembly. As $\gamma_{12} = \gamma_{21}$ increases to $2000$ (top middle) and $4000$ (top right), some double bubbles start to separate into single red and yellow bubbles, leading to a coexistence of double-bubble and single-bubble patterns; as the splitting force grows, more single bubbles appear. In Figure \ref{fig:gamma12_effect} (bottom left) with $\gamma_{12} = \gamma_{21}=5000$, the splitting strength is strong enough to fully break all double bubbles, resulting in the purely single-bubble assembly. Further increasing $\gamma_{12} = \gamma_{21}$ to $6000$ forces some red and yellow single bubbles to form distinct clusters. At last, with $\gamma_{12} = \gamma_{21} = 8000$ (bottom right), the repulsive force is strong enough to entirely separate the red and yellow bubbles into distinct regions. Although there are no theoretical studies of the SNO model on the spherical domain, our numerical results are consistent with the findings in square and disk domains \cite{Wang_Ren_Zhao2019,LuoZhao_AAMM2024}.

\begin{figure}[htbp]
\begin{center}
\includegraphics[width=1\textwidth]{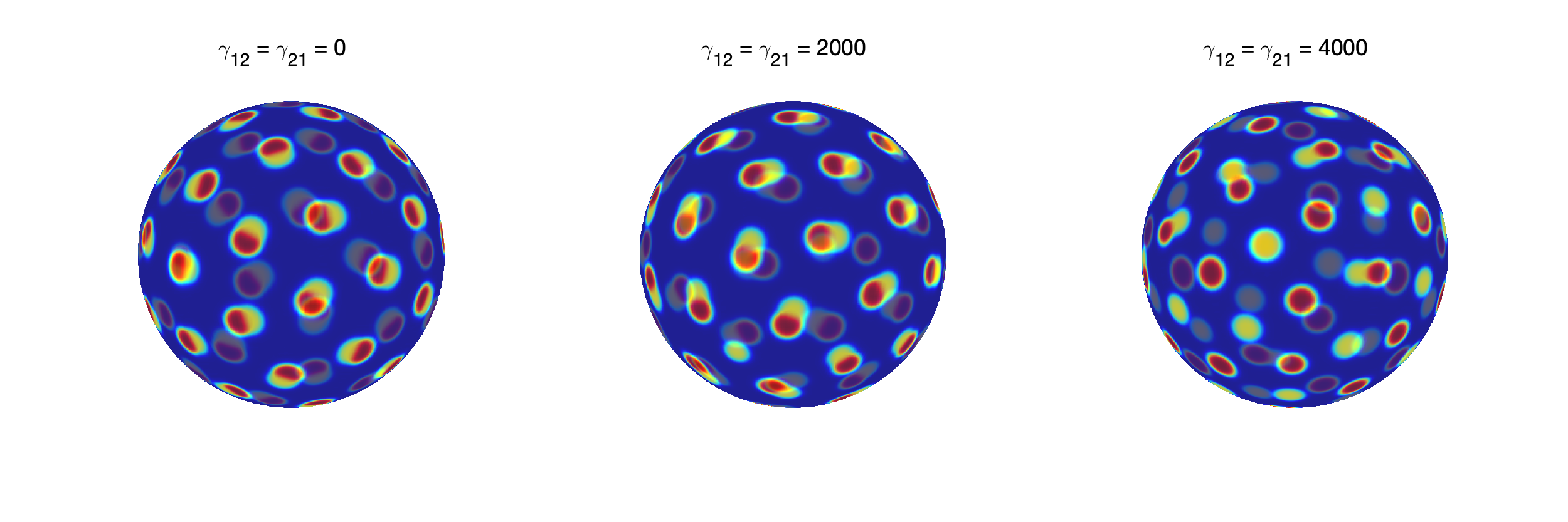}
\includegraphics[width=1\textwidth]{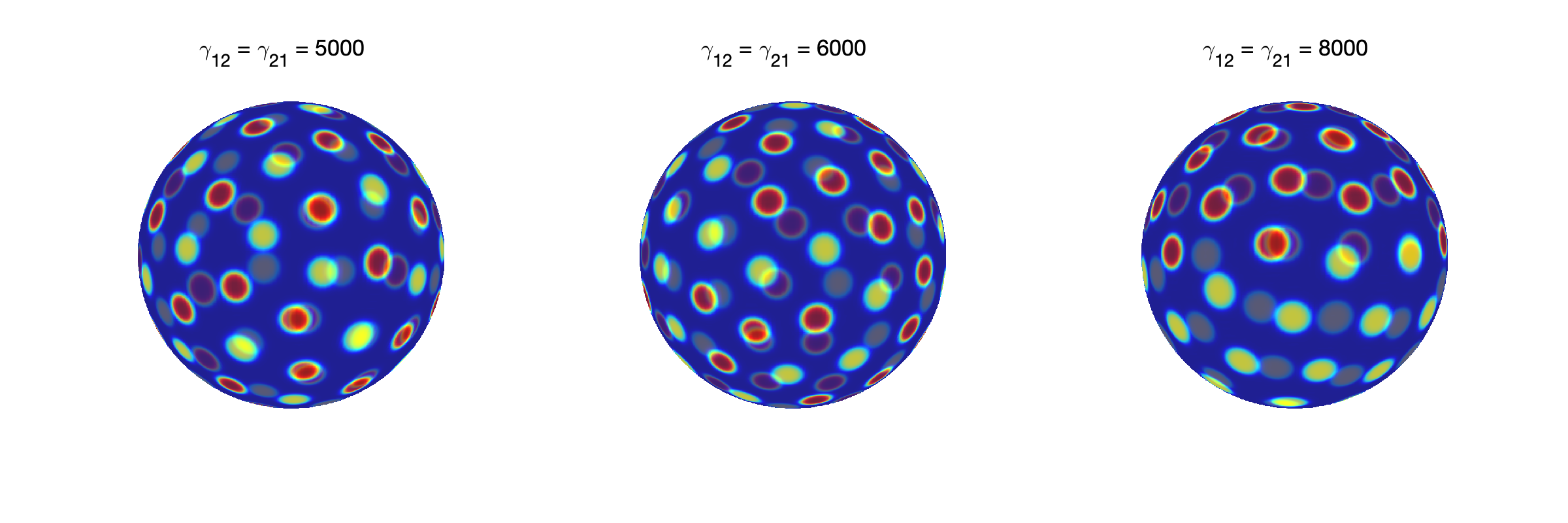}
\end{center}
\caption{The influence of $\gamma_{12} = \gamma_{21}$ to the pattern formation for the SNO model with parameters $\omega_{1} = \omega_{2} = 0.09$, $\gamma_{11} = \gamma_{22} = 6000$, $\kappa_1 = \kappa_2 = 2000$, $\beta_1 = \beta_2 = 0$ and $M_1 = M_2 = 1000$. The value of $\gamma_{12}$ for each case is presented on the top of each subplot.}
\label{fig:gamma12_effect}
\end{figure}


\section{Concluding Remarks}

In this paper, we develop a numerical method for the pACSOK and pACSNO systems to investigate coarsening dynamics and pattern formations of the SOK and SNO models. We apply the DFS method for spatial discretization and use the BDF2 scheme for time evolution to construct an energy-stable scheme for the pACSOK and pACSNO systems. 

In our numerical experiments, we observe phase separation and single-bubble assemblies on the spherical surface using pACSOK dynamics for the binary system when one species occupies a much smaller volume than the other. We notice the hexagonal structure in the self-assembled patterns for the SOK model, which successfully resembles the experimental patterns for multicomponent biomembranes \cite{Baumgart_Nature2003}. For the pACSNO dynamics, we observe that varying repulsive forces between species lead to different bubble assemblies, including all-double-bubble, all-single-bubble, and coexistence of single-bubble and double-bubble patterns. Meanwhile, we numerically investigate the relation between the number of bubbles and the repulsive strengths, which follows an increasing two-thirds law \cite{Wang_Ren_Zhao2019}.

In the future work, we aim to develop a model that couples the binary system of the OK model on a deformable surface. This model combines the energy functional of the OK model (\ref{eqn:OK}) with the phase field elastic bending energy  \cite{WangDu_MathBio2008}
\begin{align*}
E^{\text{bending}} = \int_{\Omega}\kappa(u)H(u)^2 \ dx,
\end{align*}
where the function $u$ represents the density of one component of the biomembranes, and $H(u)$ denotes the mean curvature in phase field formulation, and both volume and surface area of the biomembranes are fixed. Specifically, we will investigate various patterns of the OK model on deformable surfaces for various values of system parameters. Such models may reproduce additional experimental observations in multicomponent biomembranes \cite{Baumgart_Nature2003}. Moreover, we will continue studying pattern formations of the NO model theoretically, investigating how the parameters influence phase separation in the ternary system on disks and spheres. In addition, while the current numerical schemes for the OK and NO models are limited to second-order accuracy in time, we would like to develop higher-order algorithms, such as fourth-order backward differentiation formulas (BDF4) or exponential time differencing schemes (ETDRK4) to improve the accuracy of numerical simulations. Furthermore, it is also an interesting future work to develop the maximal principle preserving (MPP) schemes for the ACOK and ACNO equations on different domains.


\section*{Acknowledgments} 

This work is supported by a grant from the Simons Foundation through Grant No. 357963 and NSF grant DMS-2142500.

\section*{Data Availability} 

Datasets generated in the current study are available from the corresponding author on request. 

\section*{Declarations} 

\textbf{Conflict of Interest:} The authors declare that they have no conflict of interest.


\bibliography{citation}

\end{document}